\newif\ifIEEE
\newtheorem{definition}{Definition}
\newtheorem{theorem}{Theorem}
\newtheorem{proposition}{Proposition}
\newtheorem{example}{Example}
\newtheorem{remark}{Remark}
\newenvironment{proof}{\begin{IEEEproof}}{\end{IEEEproof}}
\newenvironment{enumingen}[5]{\begin{enumerate*}[label=#5),itemjoin={{#2 }},itemjoin*={{#2 #3 }},after={#4},#1]}{\end{enumerate*}}
\newenvironment{enumin}[2][]{\begin{enumingen}{#1}{#2}{and}{.}{\arabic*}}{\end{enumingen}}
\newenvironment{enuminNoAnd}[2][]{\begin{enumingen}{#1}{#2}{}{.}{\arabic*}}{\end{enumingen}}
\newenvironment{enuminii}[2][]{\begin{enumingen}{#1}{#2}{and}{.}{\roman*}}{\end{enumingen}}
\newcommand{\Init}{\ensuremath{\mathit{X_0}}\xspace}
\newcommand{\Uns}{\ensuremath{\mathit{X_U}}\xspace}
\newcommand{\Prog}{\ensuremath{\mathit{Prog}}\xspace}
\newcommand{\class}{\ensuremath{\mathbf{Class}}\xspace}
\newcommand{\Groba}{Gr{\"o}bner basis\xspace}
\newcommand{\K}{\ensuremath{\mathbb{K}}\xspace}
\newcommand{\R}{\ensuremath{\mathbb{R}}\xspace}
\newcommand{\C}{\ensuremath{\mathbb{C}}\xspace}
\newcommand{\N}{\ensuremath{\mathds{N}}\xspace}
\newcommand{\V}{\ensuremath{\mathds{V}}\xspace}
\newcommand{\emphii}[1]{\textbf{#1}} % used in definitions; could also use \emph here?
\newcommand{\emphiii}[1]{\textbf{#1}} % used in itemize; could also use \emph here?
\newcommand{\ideal}[1]{\ensuremath{\langle #1 \rangle}\xspace}
\newcommand{\consys}{\ensuremath{\langle X, \vec{f}, \Init\rangle}\xspace}
\newcommand{\CST}{\ensuremath{\mathcal{C}}\xspace}
\newcommand{\SOS}{\textit{SOS}\xspace}
\newcommand{\sys}{\ensuremath{M}\xspace}
\newcommand{\clus}{\ensuremath{C}\xspace}
\newcommand{\HInit}{\ensuremath{\operatorname{Init}}\xspace}
\newcommand{\HUns}{\ensuremath{\operatorname{Uns}}\xspace}
\newcommand{\vect}[1]{\ensuremath{\vec{#1}}\xspace}
\newcommand{\vx}{\ensuremath{\vect{x}}\xspace}
\newcommand{\vu}{\ensuremath{\vect{u}}\xspace}
\newcommand{\vz}{\ensuremath{\vect{0}}\xspace}
\newcommand{\defeq}{\stackrel{\text{def}}{=}\xspace}
\newcommand{\Lf}{\ensuremath{\mathcal{L}_{\vect{f}}}\xspace}
\newcommand{\Lfg}{\ensuremath{\Lf g}\xspace}
\begin{document}

\ifIEEE\else{
\mainmatter
}\fi

\title{Invariant Clusters for Hybrid Systems}

\ifIEEE\else{
% a short form should be given in case it is too long for the running head
\titlerunning{}
}\fi

%\author{Hui Kong, Sergiy Bogomolov, Christian Schilling, Yu Jiang, Thomas A. Henzinger}

%\author{Hui Kong, et al.}
%\institute{IST Austria}

\author{
\IEEEauthorblockN{
 Hui Kong\IEEEauthorrefmark{1},
 Sergiy Bogomolov\IEEEauthorrefmark{1},
 Christian Schilling\IEEEauthorrefmark{2},
 Yu Jiang\IEEEauthorrefmark{3},
 Thomas A. Henzinger\IEEEauthorrefmark{1}
}
\IEEEauthorblockA{
 \IEEEauthorrefmark{1}IST Austria, Klosterneuburg, Austria
}
\IEEEauthorblockA{
 \IEEEauthorrefmark{2}University of Freiburg, Freiburg, Germany
}
\IEEEauthorblockA{
 \IEEEauthorrefmark{3}University of Illinois at Urbana-Champaign, Illinois, USA
}
}

\maketitle

\vspace*{-5mm}

\begin{abstract}
In this paper, we propose an approach to automatically compute invariant clusters for semialgebraic hybrid systems. An invariant cluster for an ordinary differential equation (ODE) is a multivariate polynomial invariant $g(\vect{u},\vect{x})=0$, parametric in $\vect{u}$, which can yield an infinite number of concrete invariants by assigning different values to $\vect{u}$ so that every trajectory of the system can be overapproximated precisely by a union of concrete invariants. For semialgebraic systems, which involve ODEs with multivariate polynomial vector flow, invariant clusters can be obtained by first computing the remainder of the Lie derivative of a template multivariate polynomial w.r.t. its \Groba and then solving the system of polynomial equations obtained from the coefficients of the remainder. Based on invariant clusters and sum-of-squares (\SOS) programming, we present a new method for the safety verification of hybrid systems. Experiments on nonlinear benchmark systems from biology and control theory show that our approach is effective and efficient.

\keywords{hybrid system, nonlinear system, semialgebraic system, invariant, safety verification, \SOS programming}
\end{abstract} 

\section{Introduction}
Hybrid systems~\cite{henzinger1996theory} are models for systems with interacting discrete and continuous dynamics. Safety verification is among the most challenging problems in verifying hybrid systems, asking whether a set of bad states can be reached from a set of initial states. The safety verification problem for systems described by nonlinear differential equations is particularly complicated because computing the exact reachable set is usually intractable. Existing approaches are mainly based on approximate reachable set computations~\cite{asarin2003reachability,bogomolov-et-al:sttt-2015,bogomolov-etal:cav2012} and abstraction~\cite{tiwari2008abstractions,alur2003progress,bogomolov-etal:hvc2014,DBLP:conf/hvc/BogomolovSBBKG15,bogomolov2014quasi}.

An invariant is a special kind of overapproximation for the reachable set of a system. Since invariants do not involve direct computation of the reachable set, they are especially suitable for dealing with nonlinear hybrid systems. However, automatically and efficiently generating sufficiently strong invariants is challenging on its own.

In this work, we propose an approach to automatically compute invariant clusters for a class of nonlinear semialgebraic systems whose trajectories are algebraic, i.e., every trajectory of the system is essentially a common zero set (algebraic variety) of a set of multivariate polynomial equations. An invariant cluster for a semialgebraic system is a parameterized multivariate polynomial invariant $g(\vect{u},\vect{x})=0$, with parameter $\vect{u}$, which can yield an infinite number of concrete invariants by assigning different values to $\vect{u}$ so that every trajectory of the system can be overapproximated precisely by a union of concrete invariants.

The basic idea of computing invariant clusters is as follows. A sufficient condition for a trajectory of a semialgebraic system to start from and to always stay in the solution set of a multivariate polynomial equation $g(\vect{x})=0$ is that the Lie derivative of $g(\vect{x})$ w.r.t. $\vect{f}$ belongs to the ideal generated by $g(\vect{x})$, i.e., $\Lfg \in \ideal{g(\vect{x})}$, where $\vect{f}$ is the vector flow of the system. Therefore, if some $g(\vect{x})$ satisfies this condition, $g(\vect{x})=0$ is an invariant of the system. Then, according to \Groba theory, $\Lfg \in \ideal{g(\vect{x})}$ implies that the remainder of $\Lfg$ w.r.t. $\ideal{g(\vect{x})}$ must be identical to~$0$. Based on this theory, we first set up a template polynomial $g(\vect{u},\vect{x})$ and then compute the remainder $r(\vect{u},\vect{x})$ of $\Lfg$ w.r.t. $\ideal{g(\vect{u},\vect{x})}$. Since $r(\vect{u},\vect{x})\equiv 0$ implies that all coefficients $a_i(\vect{u})$ of $\vect{x}$ in $r(\vect{u},\vect{x})$ are equal to zero, we can set up a system $P$ of polynomial equations on $\vect{u}$ from the coefficients $a_i(\vect{u})$. By solving $P$ we get a set \CST of constraints on $\vect{u}$. For those elements in \CST that are linear in $\vect{u}$, the corresponding parameterized polynomial equations $g(\vect{u},\vect{x})=0$ form invariant clusters. Based on invariant clusters and \SOS programming, we propose a new method for the safety verification of hybrid systems.

\smallskip

The main contributions of this paper are as follows:
\begin{enuminNoAnd}{.}
\item We propose to generate invariant clusters for semialgebraic systems based on computing the remainder of the Lie derivative of a template polynomial w.r.t. its \Groba and solving the system of polynomial equations obtained from the coefficients of the remainder. Our approach avoids  \Groba computation and first-order quantifier elimination

\item We present a method to overapproximate trajectories precisely by using invariant clusters

\item We apply invariant clusters to the safety verification of semialgebraic hybrid systems based on \SOS programming

\item We implemented a prototype tool to perform the aforementioned steps automatically. Experiments show that our approach is effective and efficient
\end{enuminNoAnd}

\smallskip

The paper is organized as follows. Section~\ref{sec:preliminaries} is devoted to the preliminaries. In Section~\ref{sec:computation}, we introduce the approach to computing invariant clusters and using them to characterize trajectories. In Section~\ref{sec:verification}, we present a method to verify safety properties for semialgebraic continuous and hybrid systems based on invariant clusters. In Section~\ref{sec:evaluation}, we present our experimental results. In Section~\ref{sec:relatedwork}, we introduce some related works. Finally, we conclude our paper in Section~\ref{sec:conclusion}.

\section{Preliminaries}\label{sec:preliminaries}
In this section, we recall some backgrounds we need throughout the paper. We first clarify some notation conventions. If not specified otherwise, we decorate vectors $\vect{\cdot}$, we use the symbol $\K$ for a field, $\R$ for the real number field, $\C$ for the complex number field (which is algebraically closed) and $\N$ for the set of natural numbers, and all the polynomials involved are multivariate polynomials. In addition, for all the polynomials $g(\vu,\vx)$, we denote by $\vu$ the vector composed of all the $u_i$ and denote by $\vx$ the vector composed of all the remaining variables that occur in the polynomial.

\begin{definition}[Ideal]\label{def_ideal}\cite{cox2010ideals}
A subset $I$ of $\K[\vect{x}]$, is called an \emphii{ideal} if
\begin{enumin}{;}
  \item $0 \in I$
  \item if $p,q \in I$, then $p + q\in I$
  \item if $p\in I$ and $q\in \K[\vect{x}]$, then $pq\in I$
\end{enumin}
\end{definition}

\begin{definition}[Generated ideal]\cite{cox2010ideals}
  Let $g_1,\dots,g_s$ be polynomials in $\K[\vect{x}]$. The \emphii{ideal generated by $\{g_1,\dots,g_s\}$} is
  \begin{equation*}
    \ideal{g_1,\dots,g_s} \defeq \big\{ \sum_{i=1}^s h_ig_i \mid h_1,\dots,h_s\in \K[\vect{x}]\big\}.
  \end{equation*}
\end{definition}

\begin{definition}[Algebraic variety]\label{def_VarIdea}
Let $\K$ be an algebraically closed field and $I\subset \K[\vect{x}]$ be an ideal. We define the \emphii{algebraic variety} of $I$ as
\begin{equation*}
 \mathds{V}(I) \defeq \{\vect{x} \in \K^n  \mid  f(\vect{x})= 0 \text{ for %all
 } f\in I\}.
\end{equation*}
\end{definition}

%\begin{definition}\label{def_radideal}\cite{cox2010ideals}
%\cstodo{This definition is never used.}
% Let $I\subset \K[x_1,\dots,x_n]$ be an ideal. The \emphii{radical} of $I$, denoted $\sqrt{I}$, is the set
% \begin{equation*}
%   \{f  \mid  f^m \in I\quad\text{for some integer}\quad m\geq 1\}
% \end{equation*}
%\end{definition}
%
%\begin{theorem}[Hilbert's Nullstellensatz]\label{citethm3}\cite{cox2010ideals}
%Let $\K$ be an algebraically closed field. If $f,f_1,\dots,f_s\in \K[x_1,\dots,x_n]$ are such that $f\in \mathds{I}(\mathds{V}(f_1,\dots,f_s))$, then there exists an integer $m\geq 1$ such that
%\begin{equation*}
%  f^m \in \ideal{f_1,\dots,f_s}
%\end{equation*}
%\end{theorem}
%
%\begin{theorem}\label{citethm1}\cite{cox2010ideals}
%(The Ascending Chain Condition) Let
%\begin{equation*}
%  I_1 \subset I_2 \subset I_3 \subset \cdots
%\end{equation*}
%be an ascending chain of ideals in $\K[x]$. Then there exists an $N \geq 1$ such that
%\begin{equation*}
%  I_N=I_{N+1}=I_{N+2}=\cdots.
%\end{equation*}
%\end{theorem}
%
%\begin{theorem}\label{citethm2}\cite{cox2010ideals}
%  (Hilbert's Basis Theorem) Every ideal $I\subseteq \K[x]$ has a finite generating set. That is, $I=\ideal{g_1,\dots,g_k}$ for some $\{g_i\in \K[x] \mid 1\leq i \leq k\}$.
% \cstodo{The index is $k$ here and $s$ above.}
%\end{theorem}

Next, we present the notation of the Lie derivative, which is widely used in the discipline of differential geometry.

\begin{definition}[Lie derivative]\label{def_Lie}
 For a given polynomial $p\in \K[\vect{x}]$ over $\vect{x}=(x_1,\cdots,x_n)$ and a continuous system $\dot{\vect{x}} = \vect{f}$, where $\vect{f}=(f_1,\dots,f_n)$, the \emphii{Lie derivative} of $p\in \K[\vect{x}]$ along $f$ of order $k$ is defined as follows.
 \begin{equation}
		\Lf^{k} p \stackrel{\text{def}}{=}
		\left\{\begin{array}{lc} p, & k = 0\\
      \sum_{i=1}^n \frac{\partial{\Lf^{k-1} p}}{\partial{x_i}} \cdot f_i, & k \geq 1
    \end{array}\right.
 \end{equation}
\end{definition}

Essentially, the $k$-th order Lie derivative of $p$ is the $k$-th derivative of $p$ w.r.t. time, i.e., reflects the change of $p$ over time.
We write $\Lf p$ for $\Lf^{1} p$.

%\cs{The next theorem asserts that the ideal generated from the repeated application of the Lie derivative has a finite generating set.}
%
%\begin{theorem}\label{fixedthm}\cite{liu2011computing}
%  (Fixed Point Theorem) Given a polynomial $p \in \K[x]$, if,  for some $M>0$,
%  \begin{equation*}
%    \mathcal{L}_f^{M+1} p \in \ideal{\mathcal{L}_f^0 p, \mathcal{L}_f p,\dots, \mathcal{L}_f^M p},
%  \end{equation*}
%  then
%  \begin{equation*}
%   \forall k \geq M+1: \mathcal{L}_f^k p \in \ideal{\mathcal{L}_f^0 p, \mathcal{L}_f p,\dots, \mathcal{L}_f^{M-1} p}.
%  \end{equation*}
% \cstodo{Should it not be $M$ instead of $M-1$?}
%\end{theorem}

We furthermore use the following theorem for deciding the existence of a real solution of a system of polynomial constraints.

% \begin{theorem}[Real Nullstellensatz]\cite{stengle1974nullstellensatz}
% The system of polynomial equations and inequalities
% \vspace*{-5mm}
% \begin{equation*}
%   \begin{split}
%     &p_1(x)=0,\dots,p_{m_1}(x)=0,\\
%     &q_1(x)\geq 0,\dots,q_{m_2}(x)\geq 0,\\
%     &r_1(x)>0,\dots,r_{m_3}(x)>0
%   \end{split}
% \end{equation*}
%   either has a solution in $\R^n$, or there exists a polynomial identity
% \begin{equation*}
% \begin{split}
% & \sum_{i=1}^{m_1} \beta_i p_i + \sum_{v\in \{0,1\}^{m_2}}(\sum_t b_{tv})^2\cdot\prod_{j=1}^{m_2} q_j^{v_j}\\
% &  + \sum_{v\in \{0,1\}^{m_3}}(\sum_t c_{tv})^2\cdot \prod_{k=1}^{m_3} r_k^{v_k} + \sum_w s_w^2 + \prod_{k=1}^{m_3}r_k^{d_k} = 0
% \end{split}
% \end{equation*}
% where $d_k\in \N$ and $\beta_j, b_{tv}, c_{tv},s_w$ are polynomials.
% \end{theorem}
\begin{theorem}[Real Nullstellensatz]\cite{stengle1974nullstellensatz}\label{citedthrm1}
 The system of multivariate polynomial equations and inequalities
  $p_1(\vect{x})=0$, $\dots$, $p_{m_1}(\vect{x})=0$, \ \
  $q_1(\vect{x})\geq 0$, $\dots$, $q_{m_2}(\vect{x})\geq 0$, \ \
  $r_1(\vect{x})>0$, $\dots$, $r_{m_3}(\vect{x})>0$
  either has a solution in $\R^n$, or there exists the following polynomial identity
  \begin{equation}\label{thrm1eq}
    \begin{gathered}
     \sum_{i=1}^{m_1} {}\beta_i p_i \ + \sum_{v\in \{0,1\}^{m_2}} \big( \sum_t b_{tv} \big)^2\cdot\prod_{j=1}^{m_2} q_j^{v_j} + \prod_{k=1}^{m_3}r_k^{d_k} \\
     + \sum_{v\in \{0,1\}^{m_3}} \big( \sum_t c_{tv} \big)^2\cdot \prod_{k=1}^{m_3} r_k^{v_k} + \sum_w s_w^2  = 0
    \end{gathered}
  \end{equation}
  where $d_k\in \N$ and $p_i, q_j, r_k, \beta_j, b_{tv}, c_{tv},s_w$ are polynomials in $\R[\vect{x}]$.
\end{theorem}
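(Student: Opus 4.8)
I would first note that the two alternatives cannot both hold: if some $\vect{x}\in\R^n$ satisfies all the listed equations and inequalities, then in~\eqref{thrm1eq} the term $\sum_i\beta_i p_i$ vanishes, each square $s_w^2$ and each product $\big(\sum_t b_{tv}\big)^2\prod_j q_j^{v_j}$, $\big(\sum_t c_{tv}\big)^2\prod_k r_k^{v_k}$ is nonnegative, and $\prod_k r_k^{d_k}$ is strictly positive (an empty product being $1$), so the left-hand side would be strictly positive rather than $0$. Hence it suffices to prove the contrapositive of the remaining implication: if the system has \emph{no} solution in $\R^n$, then an identity of the form~\eqref{thrm1eq} exists. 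The plan is the classical ordered-field argument behind Stengle's Positivstellensatz.

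First I would pass to the coordinate ring $A=\R[\vect{x}]/\ideal{p_1,\dots,p_{m_1}}$, so that a real solution of the system is precisely an $\R$-algebra homomorphism $\varphi\colon A\to\R$ with $\varphi(\bar q_j)\ge 0$ and $\varphi(\bar r_k)>0$ for all $j,k$ (bars denoting images in $A$). Let $T\subseteq A$ be the preordering generated by $\bar q_1,\dots,\bar q_{m_2},\bar r_1,\dots,\bar r_{m_3}$ — the finite sums of squares times squarefree products of these generators — and let $S\subseteq A$ be the multiplicative monoid generated by $\bar r_1,\dots,\bar r_{m_3}$. The key translation is that the \emph{nonexistence} of an identity of the shape~\eqref{thrm1eq} is equivalent to $(-S)\cap T=\emptyset$ (and $0\notin S$): a relation $s+t=0$ with $s\in S$, $t\in T$, once lifted back to $\R[\vect{x}]$, is exactly such an identity, with $s$ supplying the distinguished term $\prod_k r_k^{d_k}$, the square/product summands of $t$ supplying the rest, and the lift of the zero class of $A$ supplying $\sum_i\beta_i p_i\in\ideal{p_1,\dots,p_{m_1}}$.

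Assuming no identity, so $(-S)\cap T=\emptyset$, I would invoke Zorn's lemma to enlarge $T$ to a preordering $P$ of $A$ that is maximal subject to $(-S)\cap P=\emptyset$. Standard facts from the algebraic theory of orderings then show such a maximal $P$ is a \emph{prime cone}: its support $\mathfrak{p}=P\cap(-P)$ is a prime ideal, $P/\mathfrak{p}$ extends to the positive cone of a total order on $K=\operatorname{Frac}(A/\mathfrak{p})$, and in that order each $\bar r_k$ is strictly positive while each $\bar q_j$ is nonnegative. Passing to the real closure $R$ of $K$, the images of $x_1,\dots,x_n$ yield a point $\xi\in R^n$ with $p_i(\xi)=0$, $q_j(\xi)\ge 0$, $r_k(\xi)>0$. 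Finally, by the Tarski--Seidenberg transfer principle (model-completeness of the theory of real closed fields) the existential first-order sentence asserting the existence of such a $\xi$, being true in $R$, is also true in the real closed field $\R$; equivalently one may appeal to the Artin--Lang homomorphism theorem to specialize $\xi$ directly to a point of $\R^n$. Either way this produces a real solution, contradicting the hypothesis and establishing the contrapositive.

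I expect the main obstacle to be precisely this last passage — from the purely algebraic statement $(-S)\cap T=\emptyset$ to a genuine point of $\R^n$: it needs the structure theory of maximal preorderings (that a maximal $P$ avoiding $-S$ is a prime cone) together with a transfer/homomorphism ingredient (Tarski--Seidenberg or Artin--Lang) to descend from the real closed extension $R$ down to $\R$. A secondary, more mechanical difficulty is to reconcile the abstract relation $s+t=0$ with the exact syntactic shape of~\eqref{thrm1eq}: one must split the preordering element $t$ into its sum-of-squares part $\sum_w s_w^2$, its $q$-part, and its $r$-part, and clear any denominators picked up when moving between $A$, $K$ and $\R[\vect{x}]$ by multiplying through by the square of a suitable product of the $r_k$ — harmless because those polynomials are positive on the (hypothetical) solution set — so as to land on the stated form with $s=\prod_k r_k^{d_k}$ and $\sum_i\beta_i p_i$ the chosen member of the ideal.
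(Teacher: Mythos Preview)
The paper does not prove this theorem: it is quoted as a classical result with the citation \cite{stengle1974nullstellensatz} attached directly to the theorem heading, and is used thereafter only as a black box (see Remark~\ref{remark_sos} and the proof of Theorem~\ref{thrm2}). So there is no ``paper's own proof'' to compare your attempt against.

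Your sketch is the standard ordered-field proof of Stengle's Positivstellensatz and is correct in outline: pass to the quotient by the ideal of the equalities, form the preordering $T$ and the multiplicative monoid $S$, observe that the nonexistence of a certificate means $(-S)\cap T=\emptyset$, enlarge $T$ by Zorn to a maximal preordering avoiding $-S$, check it is a prime cone, and then descend from the resulting real closed field to $\R$ via Tarski--Seidenberg or Artin--Lang. One genuine point to watch, beyond what you already flagged: you take $T$ to be the preordering generated by \emph{all} the $\bar q_j$ and $\bar r_k$, so an abstract witness $t\in T$ will in general contain mixed squarefree products such as $\sigma\,q_1 r_1$. The identity~\eqref{thrm1eq} as printed in the paper keeps the $q$-sum and the $r$-sum separate and shows no mixed $q$--$r$ monomials, so the ``mechanical'' reconciliation you mention is not entirely trivial here; you would either have to argue that mixed terms can be eliminated (e.g.\ by multiplying the whole relation by a suitable even power $\prod_k r_k^{2e_k}$ and regrouping), or simply note that the displayed form is the authors' transcription of Stengle's certificate and defer the exact bookkeeping to the cited source.
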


\begin{remark}\label{remark_sos}
Theorem~\ref{citedthrm1} enables us to efficiently decide if a system of polynomial equations and inequalities has a real solution. By moving the term $\sum_w s_w^2$ in equation~\eqref{thrm1eq} to the right-hand side and denoting the remaining terms by $R(\vect{x},\vect{y})$, we have $-R(\vect{x},\vect{y}) = \sum_w s_w^2$, which means that $-R(\vect{x},\vect{y})$ is a sum-of-squares. Therefore, finding a set of polynomials $\beta_j, r_k, b_{tv}, c_{tv}, s_w$ as well as some $d_k$'s that make $-R(\vect{x},\vect{y})$ a sum-of-squares is sufficient to prove that the system has no real solution, which can be done efficiently by \SOS programming~\cite{prajna2005sostools}.
\end{remark}

In this paper, we focus on semialgebraic continuous and hybrid systems which are defined in the following, respectively.

\begin{definition}[Semialgebraic system]
A \emphii{semialgebraic system} is a tuple $\sys \stackrel{\text{def}}{=} \consys$, where
\begin{enumerate}
  \item $X$ is the state space of the system $\sys$,
  \item $\vect{f}$ is a Lipschitz continuous vector flow function, and
  \item \Init is the initial set described by a semialgebraic set.
\end{enumerate}
\end{definition}

The Lipschitz continuity guarantees existence and uniqueness of the differential equation $\dot{\vect{x}}=\vect{f}$. A trajectory of a semialgebraic system is defined as follows.

\begin{definition}[Trajectory]
Let $\sys$ be a semialgebraic system. A \emphii{trajectory} originating from a point $\vect{x}_0\in \Init$ to time $T>0$ is a continuous and differentiable function $\vect{x}(t):[0, T)\to \mathbb{R}^n$ such that
  \begin{enuminii}{,}
   \item $\vect{x}(0)=\vect{x}_0$
   \item $\forall \tau \in [0,T): \frac{d\vect{x}}{dt}\big|_{t=\tau} = \vect{f}(\vect{x}(\tau))$
  \end{enuminii}
\end{definition}

\vspace*{1mm}

\begin{definition}[Safety]
Given an unsafe set $\Uns$, a semialgebraic system $\sys = \consys$ is said to be \emphii{safe} if no trajectory $\vect{x}(t)$ of $\sys$ satisfies both $\vect{x}(0)\in \Init$ and $\exists \tau\in \mathbb{R}_{\geq 0}:\vect{x}(\tau)\in \Uns$.
\end{definition}

\begin{definition}[Hybrid System] A \emphii{hybrid system} is described by a tuple $\mathcal{H} \defeq \langle L,X,E,R,G,I,F\rangle$, where
\begin{itemize}
\item $L$ is a finite set of locations (or modes);
\item $X \subseteq \mathbb{R}^n$ is the continuous state space. The hybrid state space of the system is denoted by $\mathcal{X} = L \times X$  and a state is denoted by $(l,\vx) \in \mathcal{X}$;
\item $E \subseteq L \times L$ is a set of discrete transitions;
\item $G:E \to 2^X$ is a guard mapping over discrete transitions;
\item $R:E \times 2^X \to 2^X$ is a reset mapping over discrete transitions;
\item $I : L \to 2^X$ is an invariant mapping;
\item $F : L \to (X \to X)$ is a vector field mapping which assigns to each location $l$ a vector field $\vect{f}$.
\end{itemize}
\end{definition}

The transition and dynamic structure of the hybrid system defines a set of trajectories. A trajectory is a sequence originating from a state $(l_0,\vx_0) \in \mathcal{X}_0$, where $\mathcal{X}_0 \subseteq \mathcal{X}$ is an initial set, and consisting of a series of interleaved continuous flows and discrete transitions. During the continuous flows, the system evolves following the vector field $F(l)$ at some location $l\in L$ as long as the invariant condition $I(l)$ is not violated. At some state $(l,\vx)$, if there is a discrete transition $(l,l')\in E$ such that $(l,\vx) \in G(l,l')$ (we write $G(l,l')$ for $G((l,l'))$), then the discrete transition can be taken and the system state can be reset to $R(l,l',\vx)$. The problem of safety verification of a hybrid system is to prove that the hybrid system cannot reach an unsafe set $\Uns$ from an initial set $\Init$.

\section{Computation of Invariant Clusters}\label{sec:computation}
In this section, we first introduce the notion of invariant cluster and then show how to compute a set of invariant clusters and how to use it to represent every trajectory of a semialgebraic system.

\subsection{Foundations of invariants and invariant clusters}\label{subsec:invcondition}
Given a semialgebraic system $\sys$, for any trajectory $\vx(t)$ of $\sys$, if we can find a multivariate polynomial $g(\vx)\in \R[\vx]$ such that $g(\vx(0)) \sim 0$ implies $g(\vx(t))\sim 0$ for all $t>0$, where $\sim\ \in \{<, \leq, =, \geq, >\}$, then $g(\vx) \sim 0$ is an \emph{invariant} of the system. For the convenience of presentation, we call $g(\vx)$ an \emph{invariant polynomial} of $\sys$. In addition, a trajectory $\vx(t)$ is said to be \emph{algebraic} if there exists an invariant $g(\vx) = 0$ for $\vx(t)$ and $g(\vx) \not\equiv 0$. In the following, we present a sufficient condition for a polynomial $g(\vx)$ to be an invariant polynomial.

\begin{proposition}\label{prop9}
Let $\sys=\consys$ be a semialgebraic system and $g(\vx)\in \R[\vx]$. Then $g \sim 0$ is an invariant of $\sys$ for every $\sim\ \in \{<, \leq, =, \geq, >\}$ if $g(\vx)$ satisfies
\begin{equation}\label{eq10}
  \Lfg \in \ideal{g}
\end{equation}
\end{proposition}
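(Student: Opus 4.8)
The plan is to show that the condition $\Lfg \in \ideal{g}$ forces the value $g(\vx(t))$ along any trajectory to satisfy a linear scalar ODE, whose solution cannot change sign (or leave zero) if it starts at, below, above, or equal to $0$. First I would fix a trajectory $\vx(t)$ of $\sys$ on $[0,T)$ and define the scalar function $\phi(t) \defeq g(\vx(t))$. Differentiating with respect to time and using the chain rule together with the trajectory condition $\tfrac{d\vx}{dt} = \vect{f}(\vx(t))$, I get $\dot\phi(t) = \sum_{i=1}^n \frac{\partial g}{\partial x_i}(\vx(t))\, f_i(\vx(t)) = (\Lfg)(\vx(t))$, which is exactly the Lie derivative evaluated along the trajectory.

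Next I would use the hypothesis. By~\eqref{eq10}, $\Lfg = h\cdot g$ for some polynomial $h\in\R[\vx]$ (the ideal generated by a single polynomial consists precisely of its polynomial multiples). Substituting $\vx(t)$ gives $\dot\phi(t) = h(\vx(t))\,\phi(t)$. Writing $\lambda(t) \defeq h(\vx(t))$, which is a continuous function of $t$ since $h$ is polynomial and $\vx(t)$ is continuous, this is the linear homogeneous ODE $\dot\phi(t) = \lambda(t)\,\phi(t)$ with solution $\phi(t) = \phi(0)\exp\!\big(\int_0^t \lambda(s)\,ds\big)$. The exponential factor $\mu(t) \defeq \exp(\int_0^t \lambda(s)\,ds)$ is well-defined and strictly positive for all $t\in[0,T)$.

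From the representation $\phi(t) = \phi(0)\,\mu(t)$ with $\mu(t) > 0$, the claim follows immediately for each relation $\sim$: if $g(\vx(0)) \sim 0$, i.e. $\phi(0)\sim 0$, then since multiplying by the positive number $\mu(t)$ preserves each of the relations $<,\leq,=,\geq,>$, we get $\phi(t) = \phi(0)\mu(t) \sim 0$, that is $g(\vx(t))\sim 0$, for all $t\in[0,T)$. As $T$ was an arbitrary time horizon and the trajectory was arbitrary, $g\sim 0$ is an invariant of $\sys$ for every $\sim\in\{<,\leq,=,\geq,>\}$.

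The only subtle point — and the step I would be most careful about — is justifying that $\phi$ is genuinely differentiable with $\dot\phi = (\Lfg)\circ\vx$ and that the integrating factor is finite on the whole interval: this rests on $g$ and $h$ being polynomials (hence smooth) and on $\vx(t)$ being $C^1$ with values staying in the domain, which is guaranteed by the definition of a trajectory and the Lipschitz continuity of $\vect{f}$ in the definition of a semialgebraic system. No compactness or global bound on $\lambda(t)$ is needed because we only integrate over the finite interval $[0,t]\subset[0,T)$ on which the trajectory is already assumed to exist.
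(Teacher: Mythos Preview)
Your proof is correct and takes a genuinely different route from the paper's. The paper first shows by induction that all higher Lie derivatives $\Lf^k g$ lie in $\ideal{g}$, then invokes the Taylor expansion of $g(\vx(t))$ at any time $\tau$ to argue that if $g(\vx(\tau))=0$ then all Taylor coefficients vanish, so $g$ stays zero on a neighbourhood; a separate contradiction argument handles the sign-preservation for $g(\vx(0))\neq 0$. Your argument instead observes directly that $\phi(t)=g(\vx(t))$ satisfies the scalar linear ODE $\dot\phi=\lambda(t)\phi$ and writes down the integrating-factor solution $\phi(t)=\phi(0)\exp\big(\int_0^t\lambda\big)$, from which sign preservation for all five relations follows in one stroke. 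Your approach is shorter and avoids the analyticity/Tay\-lor machinery (which the paper uses tacitly, relying on the polynomial vector field making solutions analytic); the paper's route, on the other hand, sets up the fact $\Lf^k g\in\ideal{g}$ explicitly, which is of independent interest and reusable elsewhere. Both are valid; yours is the more elementary ODE argument.
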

\begin{proof}
See Appendix~\ref{prfofprop9}.
\end{proof}

Proposition~\ref{prop9} states that all the polynomial equations and inequalities $g \sim 0$ are invariants of $\sys$ if the Lie derivative of $g$ belongs to the ideal \ideal{g}. Note that every invariant satisfying condition~\eqref{eq10} defines a closed set for trajectories, that is, no trajectory can enter or leave the set defined by the invariant.

\smallskip

For a semialgebraic system whose trajectories are algebraic, the trajectories can usually be divided into several groups and in each group all trajectories show similar curves. Essentially, these similar curves can be described identically by a unique parameterized polynomial equation, which we characterize as an invariant cluster. The computation method of invariant clusters is presented in Subsection~\ref{sec_compute}.

\begin{definition}[Invariant cluster]\label{def_invcluster}
An \emphii{invariant cluster} $\clus$ of a semialgebraic system is a set of invariants which can be uniformly described as
% \begin{equation}\label{eq_invdef}
$
 \clus=\{g(\vu,\vx)=0 \mid \vu\in \R^K\setminus\{\vec{0}\} \}
$,
% \end{equation}
where $g(\vu,\vx)=\sum_{i=1}^M c_i(\vu) X^i$ satisfies $\Lfg \in \ideal{g}$ and $c_i(\vu)\in \R[\vu]$ are fixed linear polynomials on $\vu=(u_1,\cdots, u_K)$, $X^i$ are monomials on $\vx=(x_1,\cdots,x_n)$, and $M, K\in \N$.
\end{definition}

Note that by requiring $\vu\neq \vect{0}$ in Definition~\ref{def_invcluster} and the following related definitions, we exclude the trivial invariant $0=0$. Given an invariant cluster, by varying the parameter $\vu$, we may obtain an infinite set of concrete invariants for the system. To be intuitive, we present a running example to demonstrate the related concepts throughout the paper.

\begin{example}[running example]\label{exam1}
Consider the semialgebraic system $\sys_1$ described by $\left[ \dot{x}, \dot{y} \right] = \left[ y^2, xy \right]$. The set $C^*=\{u_1 - u_3(x^2 - y^2)=0\mid (u_1,u_3)\in \R^2 \setminus \{\vect{0}\}\}$ is an invariant cluster. It is easy to verify that the polynomial $u_1 - u_3(x^2 - y^2)$ satisfies condition~\eqref{eq10} for all $(u_1,u_3)\in \R^2$.
\end{example}

\smallskip

Next we introduce the notion of an invariant class.

\begin{definition}[Invariant class]\label{def_inst}
Given a semialgebraic system $\sys$ with some initial point $\vx_0$ and an invariant cluster $\clus=\{g(\vu,\vx)=0 \mid \vu\in \R^K\setminus\{\vec{0}\} \}$ of $\sys$, where $K\in \N$, an \emphii{invariant class} of $\clus$ at $\vx_0$, denoted by $\class(\clus,\vx_0)$, is the set
 $
 \{g(\vu,\vx)=0 \mid g(\vu,\vx_0)=0$, $\vu\in \R^K \setminus \{\vect{0}\}\}.
 $
\end{definition}

Given an invariant cluster $\clus$, by substituting a specific point $\vx_0$ for $\vx$ in $g(\vu,\vx)=0$, we obtain a constraint $g(\vu,\vx_0)=0$ on $\vu$, which yields a subset $\class(\clus,\vx_0)$ of $\clus$. Apparently, every member of $\class(\clus,\vx_0)$ is an invariant for the trajectory originating from $\vx_0$.

\begin{example}[running example]\label{exam2}
For the given invariant cluster $C^*$ in Example~\ref{exam1} and a given initial point $\vx_0=(4,2)$, we get the invariant class $\class(C^*,\vx_0)=\{u_1 - u_3(x^2 - y^2)=0\mid u_1-12u_3=0, (u_1,u_3)\in \R^2\setminus \{\vect{0}\}\}$. Every member of $\class(C^*,\vx_0)$ is an invariant for the trajectory of $\sys_1$ originating from $\vx_0$. The algebraic variety defined by $\class(C^*,\vx_0)$ is shown in Figure~\ref{fig_example7}.
\end{example}

\begin{figure}[!t]
% \hfill
% \begin{minipage}[t]{0.4\textwidth}
%   \vspace{0pt}
  \begin{center}
  \includegraphics[height=4cm,keepaspectratio]{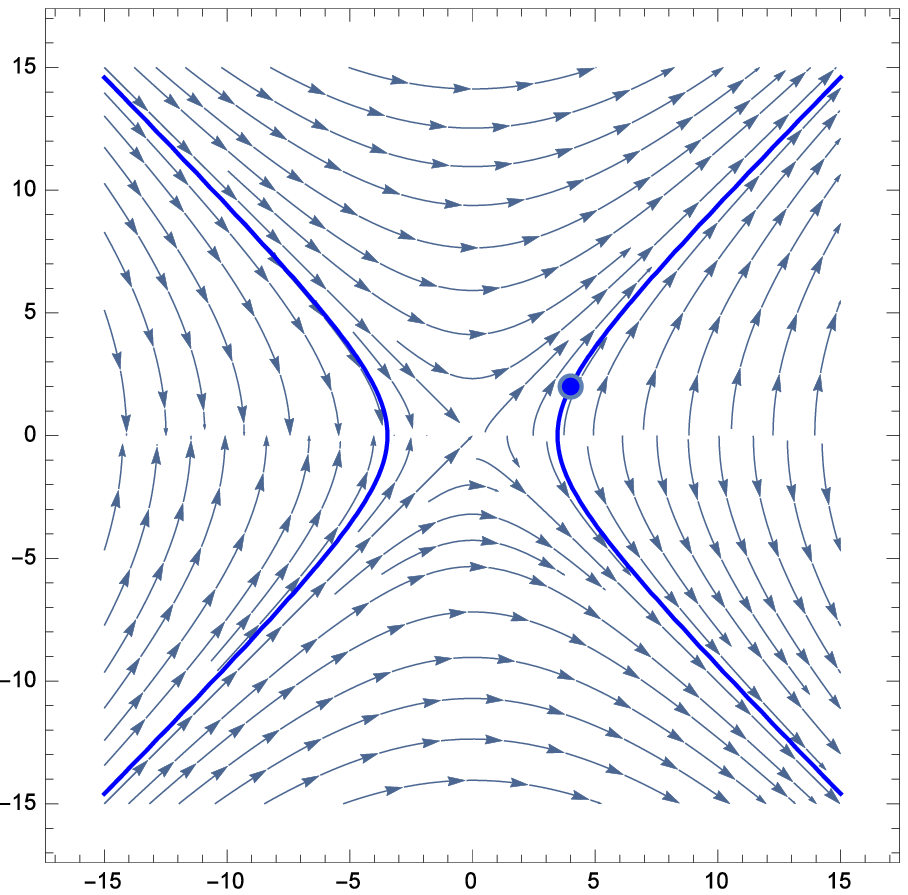}
  \end{center}
  \caption{Example~\ref{exam2}. Curve defined by invariant cluster of $\class(C^*,\vx_0)$ for $\vx_0=(4,2)$.}
  \label{fig_example7}
% \end{minipage}
% \hfill\quad\hfill
% \begin{minipage}[t]{0.45\textwidth}
%   \vspace{0pt}
  \begin{center}
  \includegraphics[height=4cm,keepaspectratio]{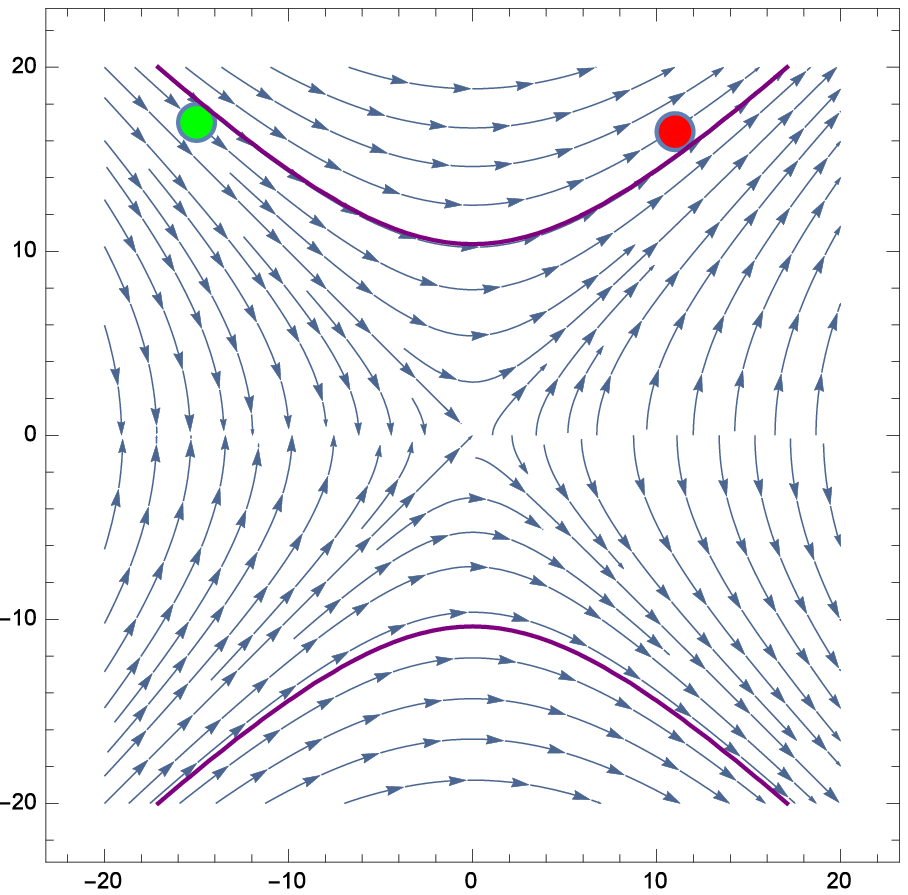}
  \end{center}
  \caption{Example~\ref{example9}. $\Init: (x+15)^2+(y-17)^2 \leq 1$, $\Uns:(x_1-11)^2+(y_1-16.5)^2 \leq 1$.
  %Verified to be safe by \SOS programming
  }
  \label{fig_example9}
% \end{minipage}
% \hfill\
\end{figure}

An invariant class has the following important properties.

\begin{theorem}\label{thrm7}
Given an $n$-dimensional semialgebraic system $\sys$ and an invariant class $D=\{g(\vu,\vx)=0 \mid g(\vu,\vx_0)=0$, $\vu \in \R^K\setminus \{\vect{0}\}\}$ of $\sys$ at a specified point $\vx_0$, let $D_g$ be the set of all invariant polynomials occurring in $D$ and $\pi_{\vx_0}$ be the trajectory of $\sys$ originating from $\vx_0$. Then,
\begin{enumerate}
  \item\label{item1} $\pi_{\vx_0} \subseteq \V(D_g)$;
  \item\label{item2} there must exist a subset $B$ of $D_g$ consisting of $m$ members such that $\ideal{D_g} = \ideal{B}$, where $m$ is the dimension of the hyperplane $g(\vu,\vx_0)=0$ in terms of $\vu$ in $\R^K$ and $\ideal{D_g}$ and $\ideal{B}$ denote the ideal generated by the members of $D_g$ and $B$, respectively.
\end{enumerate}
\end{theorem}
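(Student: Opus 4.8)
The plan is to prove the two items separately, using the structure of $D$ as a linear slice of the invariant cluster in parameter space. For item~\ref{item1}, I would fix an arbitrary invariant polynomial $g(\vu,\vx) \in D_g$, so that by definition $g(\vu,\vx_0)=0$ and $\Lf g \in \ideal{g}$. By Proposition~\ref{prop9}, $g(\vu,\vx)=0$ is an invariant of $\sys$; since the trajectory $\pi_{\vx_0}$ starts at $\vx_0$ and $g(\vu,\vx_0)=0$, the invariance property (in the $\sim$ being ``$=$'' case) forces $g(\vu,\vx(t))=0$ for all $t$ along $\pi_{\vx_0}$. Hence every point of $\pi_{\vx_0}$ lies in the zero set of every member of $D_g$, i.e. $\pi_{\vx_0} \subseteq \bigcap_{g \in D_g} \V(\ideal{g}) = \V(D_g)$. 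This part is essentially a direct unwinding of the definitions plus Proposition~\ref{prop9}.

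For item~\ref{item2}, the key observation is that $g(\vu,\vx) = \sum_{i=1}^M c_i(\vu) X^i$ is \emph{linear} in $\vu$, so the map $\vu \mapsto g(\vu,\cdot)$ is a linear map from $\R^K$ into $\R[\vx]$; write $G$ for this linear map. The constraint $g(\vu,\vx_0)=0$ is a single linear equation $\sum_i c_i(\vu) X^i\big|_{\vx=\vx_0} = 0$ in $\vu$, cutting out a linear subspace $U \subseteq \R^K$ of dimension $m$ (this is what ``dimension of the hyperplane $g(\vu,\vx_0)=0$'' means). Then $D_g = G(U \setminus \{\vz\})$, and the $\R$-linear span of $D_g$ equals $G(U)$, which has dimension at most $m$ as a vector subspace of $\R[\vx]$. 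Choose $\vu_1,\dots,\vu_m$ a basis of $U$ (or, if $\dim G(U) < m$, pad with the understanding that we want $m$ members — I would actually take a basis of $G(U)$ and note its size is $\le m$, then argue the bound $m$ suffices); set $B = \{\,g(\vu_1,\vx),\dots,g(\vu_m,\vx)\,\}$. Every element of $D_g$ is an $\R$-linear combination of elements of $B$, so $D_g \subseteq \ideal{B}$, hence $\ideal{D_g} \subseteq \ideal{B}$; the reverse inclusion $\ideal{B} \subseteq \ideal{D_g}$ is immediate since $B \subseteq D_g$. Therefore $\ideal{D_g} = \ideal{B}$ with $|B| = m$.

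The main obstacle — and the point I would be most careful about — is the precise bookkeeping in item~\ref{item2}: disentangling the \emph{linear-span} dimension of $D_g$ (a subspace of $\R[\vx]$) from the dimension $m$ of the parameter slice $U \subseteq \R^K$. A priori $\dim G(U)$ could be strictly smaller than $m$ if $G$ has a kernel meeting $U$ nontrivially, in which case one genuinely needs only $\dim G(U) \le m$ generators; the statement asks for exactly $m$, which is still correct since one can always enlarge a spanning set. I would handle this by first reducing to a basis of $G(U)$, proving $\ideal{D_g} = \ideal{B_0}$ for that basis $B_0$, and then observing $|B_0| \le m$ so one may pad $B_0$ up to $m$ elements of $D_g$ without changing the generated ideal. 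A secondary subtlety: one must make sure that for each chosen $\vu_j$ the polynomial $g(\vu_j,\vx)$ is genuinely in $D_g$, which requires $\vu_j \ne \vz$ — automatic if $\vu_j$ is a basis vector of $U$ — and requires that it satisfies $\Lf g \in \ideal{g}$, which holds for \emph{every} $\vu$ by the defining property of an invariant cluster, so membership in $D_g$ only needs $g(\vu_j,\vx_0)=0$, i.e. $\vu_j \in U$. Both are satisfied by construction.
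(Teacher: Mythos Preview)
Your proposal is correct and follows essentially the same approach as the paper's proof: for item~\ref{item1} both you and the paper invoke the invariance of each $g(\vu,\vx)=0$ at $\vx_0$ to conclude $\pi_{\vx_0}\subseteq\V(D_g)$, and for item~\ref{item2} both pick a basis $\vu_1,\dots,\vu_m$ of the parameter hyperplane $U=\{g(\vu,\vx_0)=0\}$, use linearity of $g$ in $\vu$ to write every $g(\vu,\vx)\in D_g$ as an $\R$-linear combination of the $g(\vu_j,\vx)$, and conclude $\ideal{D_g}=\ideal{B}$. Your extra care about the possible kernel of $G$ and the requirement $\vu_j\neq\vz$ is more than the paper itself provides, but does not change the strategy.
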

\begin{proof}
  See Appendix~\ref{thrm7proof}.
\end{proof}

\begin{remark}
The first property in Theorem~\ref{thrm7} reveals that a trajectory $\pi_{\vx_0}$ is always contained in the intersection of all the invariants in the invariant class $D$ of $\vx_0$. The second property concludes that the invariant class can be generated by a finite subset $B$ of $D$ if it consists of an infinite number of invariants. The algebraic variety $\V(B)$ (which is equivalent to $\V(D_g)$) forms an overapproximation for $\pi_{\vx_0}$ and the quality of the overapproximation depends largely on the dimension $m$ of $\V(B)$. More specifically, the lower the dimension is, the better the overapproximation is. The ideal case is that, when $m=1$, $\V(B)$ shrinks to an algebraic curve and hence some part of the algebraic curve matches the trajectory precisely. In the case of $m>1$, $\V(B)$ is usually a hypersurface. To make the overapproximation less conservative, we may take the union of multiple invariant classes from different invariant clusters to reduce the dimension of the algebraic variety.
\end{remark}

\subsection{Computing invariant clusters}\label{sec_compute}
In the following, we propose a computation method for invariant clusters. The main idea arises from the following proposition.

\begin{proposition}\label{prop8}
Let $\sys=\consys$ be a semialgebraic system and suppose for a given multivariate polynomial $g\in \R[\vx]$ that the polynomial $\Lfg $ can be written as $\Lfg  = p g + r$. Then $\Lfg  \in \langle g \rangle$ if $r \equiv 0$.
\end{proposition}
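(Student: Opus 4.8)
The plan is to read the claim off directly from the definition of the ideal generated by a single polynomial, so the proof will be essentially one line. Recall that $\ideal{g} = \{\, hg \mid h\in\R[\vx]\,\}$ by the definition of a generated ideal (equivalently, by property (iii) in the definition of an ideal together with $g\in\ideal{g}$, every polynomial multiple of $g$ lies in $\ideal{g}$). Hence the whole task reduces to exhibiting $\Lfg$ as a polynomial multiple of $g$.

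First I would invoke the hypothesis that $\Lfg$ admits a representation $\Lfg = pg + r$ with $p,r\in\R[\vx]$; such a decomposition is exactly what polynomial division of $\Lfg$ by $g$ (or, more generally, reduction of $\Lfg$ modulo a \Groba of $\ideal{g}$ in a fixed term order) produces. Then I would substitute the assumption $r\equiv 0$, which collapses the identity to $\Lfg = pg$. Finally I would conclude, using the displayed description of $\ideal{g}$ above with $h=p$, that $\Lfg\in\ideal{g}$, i.e.\ condition~\eqref{eq10} holds; combined with Proposition~\ref{prop9} this already yields that every $g\sim 0$ is an invariant of $\sys$.

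There is no real obstacle here: the statement uses only the trivial direction ``$r\equiv 0 \Rightarrow \Lfg\in\ideal{g}$''. The one subtlety worth flagging is that the representation $\Lfg = pg+r$ is not unique when $g$ is not monic with respect to the chosen monomial order, so in the algorithm $r$ must be taken as a canonical remainder (the normal form with respect to a \Groba of $\ideal{g}$) rather than an arbitrary quotient--remainder pair. However, the converse implication ``$\Lfg\in\ideal{g} \Rightarrow r\equiv 0$'', which genuinely relies on \Groba theory, is not required for this proposition and is addressed separately.
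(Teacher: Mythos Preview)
Your proposal is correct and matches the paper's treatment: the paper states Proposition~\ref{prop8} without proof, as it is immediate from the definition of $\ideal{g}$, and your one-line argument is exactly this observation. Your additional remarks about non-uniqueness of the remainder and the role of normal forms are accurate but go beyond what the proposition itself requires.
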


According to Proposition~\ref{prop8}, if we can find a polynomial $g(\vx)$ such that the remainder of its Lie derivative w.r.t.\ $g(\vx)$ is identical to $0$, then $g(\vx)=0$ is an invariant of $\sys$. The idea is as follows. We first establish a template $g(\vu,\vx)$ for $g(\vx)$ with parameter $\vu$ and then compute the remainder $r(\vu,\vx)$ of $\Lfg $ w.r.t.\ $g(\vu,\vx)$. According to the computing procedure of a remainder~\cite{cox2010ideals}, $r(\vu,\vx)$ must be of the form $\sum_{i=1}^K \frac{b_i(\vu)}{u_j^d}X^i$, where $d\in\N$, $b_i(\vu)$ are homogeneous polynomials of degree $d+1$ over $\vect{u}$, $u_j$ is the coefficient of the leading term of $g(\vu,\vx)$ by some specified monomial order of $\vect{x}$, and $X^i$ are monomials on $\vx$. Since $r(\vu,\vx) \equiv 0$ implies $u_j\neq 0$ and $b_i(\vu)=0$ for all $i=1,\dots, K$, we obtain a system $\CST$ of homogeneous polynomial equations on $\vu$ plus $u_j\neq 0$ from the coefficients of $r(\vu,\vx)$. Solving $\CST$ can provide a set of invariant clusters of $\sys$ if it exists. Note that all the aforementioned steps can be performed automatically in a mathematical software such as \emph{Maple}. Pseudocode for computing invariant clusters is shown in Algorithm~\ref{algo_invariantdiscovery}. The principle for the loop in line $5$ is that the remainder may vary from the monomial order of $\vx$ and hence produce different solutions. Using multiple orders helps to get more solutions.

%\incmargin{1em}
%\restylealgo{boxed}\linesnumbered
\begin{algorithm}[t]
%\SetKwData{gux}{gux}
%\SetKwData{Lfg}{Lfg}
\SetKwData{Coeffs}{Coeffs}
\SetKwData{CFamily}{CFamily}
\SetKwData{Solution}{Solution}
\SetKwFunction{GenPoly}{GenPoly}
\SetKwFunction{NormalForm}{NormalForm}
\SetKwFunction{CollectCoeff}{CollectCoeff}
\SetKwFunction{ContainExistingInvariantFactor}{CollectCoefficientOfX}
\SetKwFunction{RemoveExistingFactorFrmP}{RemoveExistingFactorFrmP}
\SetKwFunction{Solve}{Solve}
\SetKwInOut{Input}{input}
\SetKwInOut{Output}{output~}
\caption{Computation of invariant clusters}\label{algo_invariantdiscovery}
\Input{$f$: $n$-dimensional polynomial vector field; \\$N$: upper bound for the degree of invariant polynomials}
\Output{\CFamily: a set of invariant clusters}
\BlankLine
%\emph{special treatment of the first line}\;
\CFamily $\leftarrow \emptyset$\;
\For{$i\leftarrow 1$ \KwTo $N$}{
$g_{\vu,\vx}\leftarrow $ generate parameterized polynomial over $\vx$ of degree $i$\;
$L_fg \leftarrow$ compute the Lie derivative of $g_{\vu,\vx}$\;
\ForEach{monomial order $\mathcal{O}$ of $\vx$}
{
$R_{\vu,\vx} \leftarrow$ compute remainder of $L_fg$ w.r.t.\ $g$ by $\mathcal{O}$\;
$\Coeffs \leftarrow$ collect coefficients of $\vx$ in $R_{\vu, \vx}$\;
$\Solution \leftarrow$ solve system $\Coeffs$ on $\vu$\;
  \CFamily $\leftarrow$ \CFamily $\cup\ \Solution$\;
}
}
\end{algorithm}
%\decmargin{1em}

\begin{remark}[\textbf{Complexity}]
In Algorithm~\ref{algo_invariantdiscovery}, the key steps are computing the remainder in line $4$ and solving the system of equations on $\vu$ in line $8$. The former takes only linear time and hence is very efficient. The latter involves solving a system of homogeneous polynomial equations, which is known to be NP-complete~\cite{ayad2010survey}. In our implementation in \emph{Maple}, we use the command \texttt{solve} for solving the system of equations. In our experiments on nonlinear (parametric) systems of dimension ranging from $2$ to $8$, the solver can return quickly in most cases whether they have solutions or not. Among the solutions we obtained, we found some complex solutions, however, we have not seen nonlinear solutions.
\end{remark}

\begin{example}[running example]\label{example7}
According to Algorithm~\ref{algo_invariantdiscovery}, the steps for computing the invariant clusters of degree~$2$ are as follows:
\begin{enumerate}
  \item Generate the template polynomial of degree $2$:
  \begin{equation*}
   g_2(\vu,\vx) = u_6  x^2 + u_5 xy + u_4 x + u_3 y^2 + u_2 y  + u_1
  \end{equation*}

  \item Compute the Lie derivative $\Lf g_2$ using Definition~\ref{def_Lie}:
  \begin{align*}
  	\Lf g_2 =\frac{\partial g_2}{\partial x}\dot{x}+\frac{\partial g_2}{\partial y}\dot{y} = {u}_{5}{x}^2y+ \left( 2\,{   u}_{3}+2\,{u}_{6}\right) x{y}^2 \\
  	+\ {u}_{2}xy+{y}^3{u}_{5}+{u}_{4}{y}^2
  \end{align*}

  \item Compute the remainder of $\Lf g_2$ w.r.t.\ $g_2$ by graded reverse lexicographic (\emph{grevlex}) order of $(x,y)$. Using this order, the leading term of $\Lf g_2$ and $g_2$ is $u_5x^2y$ and $u_6x^2$, respectively. Then:
  \begin{equation*}
  \begin{split}
  r(\vu,\vx)=&\ \Lf g_2 - \frac{u_5y}{u_6}g_2 =
      {\frac { \left( 2\,{u}_{3}{u}_{6}-u_5^{2}+2\, u_6^2 \right) x{y}^2}{{u}_{6}}} \\
        &+\ {\frac{\left( {u}_{2}{u}_{6}-{u}_{4}{ u}_{5}\right) xy}{{u}_{6}}}+{\frac { \left( -{u}_{3}{u}_{5}+{u}_{5}{u}_{6} \right) {y}^3}{{u}_{6}}}\\
        &+\ {\frac { \left( -{u}_{2}{u}_{5}+{u}_{4}{u}_{6} \right) {y}^2}{{u}_{6}}}-{\frac {{u}_{1}{u}_{5}y}{{u}_{6}}}
  \end{split}
  \end{equation*}

  \item Collect the coefficients of $r(\vu,\vx)$:
\begin{align*}
S:=\left\{{\frac {{u}_{2}{u}_{6}-{u}_{4}{u}_{5}}{{u}_{6}}}, {\frac {-{u}_{3}{u}_{5}+{u}_{5}{u}_{6}}{{u}_{6}}}, {\frac{-{u}_{2}{u}_{5}+{u}_{4}{u}_{6}}{{u}_{6}}}, \right. \\
\left. {\frac {2\,{u}_{3}{u}_{6}-{u}_{5}^2+2\,{u}_{6}^2}{{u}_{6}}}, -{\frac{{u}_{1}{u}_{5}}{{u}_{6}}}\right\}
\end{align*}

  \item Solve the system formed by $S$. To save space, we just present one of the six solutions we obtained:
\begin{equation*}
   C_6=\{u_6  = -u_3 , u_2 = u_4 = u_5  = 0, u_3  = u_3 , u_1  = u_1 \}
\end{equation*}

  \item Substitute the above solution $C_6$ for $\vu$ in $g_2(\vu,\vx)$. We get the following parameterized invariant polynomial:
\begin{equation*}
   g_2(\vu,\vx)=-u_3 x^2 + u_3 y^2 + u_1
\end{equation*}
\end{enumerate}
Note that the other five solutions obtained in step~$5$) are in fact the products of the invariant polynomials $\{u_2y, u_1(x+y), u_1(x-y)\}$, which have been obtained when initially computing the invariants of degree $1$. Hence they cannot increase the expressive power of the set of invariant clusters and should be dropped. The solution presented above is the one we have given in Example~\ref{exam1}.
\end{example}

\subsection{Overapproximating trajectories by invariant classes}
In this section, we address how to overapproximate trajectories precisely by using invariant classes.

Invariant clusters can be divided into two categories according to the number of invariant classes that they can yield by varying the parameter $\vu$.
\begin{enuminNoAnd}{.}
 \item \emphiii{finite invariant cluster}: This kind of invariant cluster can yield only one invariant class no matter how $\vu$ changes. For example, $\{u_1(x-y)=0 \mid u_1\in\R\setminus \{0\}\}$ is such an invariant cluster for the running example. In this case, the trajectories that can be covered by the invariant class is very limited. Moreover, the overapproximation is also conservative due to the high dimension of the algebraic variety defined by the invariant class

 \item \emphiii{infinite invariant cluster}: One such invariant cluster $\clus$ can yield an infinite number of invariant classes $\class(\clus,\vx_0)$ as the initial point $\vx_0$ varies, e.g., the invariant cluster $C^*$ in Example~\ref{exam1}. For the trajectory $\pi_{\vx_0}$, the overapproximating precision of $\class(\clus,\vx_0)$ depends largely on the dimension $m$ of the algebraic variety defined by $\class(\clus,\vx_0)$. The best case is $m=1$ and then it gives a curve-to-curve match in part for the trajectory
\end{enuminNoAnd}

\smallskip

Now, we introduce how to identify the invariant classes for a given point $\vx_0$ from a set of invariant clusters and how to get a finite representation for it. To be intuitive, we first present a $3$-dimensional system and a set of invariant clusters for it.

\begin{example}\label{example8}
  Consider the following semialgebraic system $\sys_2$:
  $
   \left[ \dot{x}, \dot{y}, \dot{z} \right] = \left[ yz, xz, xy \right]
  $.
We obtain a set of invariant clusters consisting of $7$ elements. Here we only present the infinite invariant cluster (for other clusters see Appendix~\ref{append_exam2}).
\begin{align*}
\clus_7=\{g_7(\vu,\vx)&=(-u_5-u_6)x^2+u_5y^2+u_6z^2+u_0 \\
& =0\mid \vu \in \R^3 \setminus \{\vect{0}\}\}
\end{align*}
\end{example}
The invariant clusters are capable of overapproximating all the trajectories of the system $\sys_2$. For any given initial state, how can we identify the invariant classes from the set of invariant clusters? Suppose we want to find the invariant classes that can overapproximate the trajectory from the state $\vx_0=(1,2,3)$. According to Theorem~\ref{thrm7}, we have Algorithm~\ref{algo_invclass} for the purpose.

%\restylealgo{boxed}\linesnumbered
\begin{algorithm}[t]
%\SetKwData{dim}{dim}
\SetKwData{Class}{Class}
\SetKwData{D}{D}
\SetKwData{Basis}{Basis}
\SetKwData{CFamily}{CFamily}
\SetKwData{Solution}{Solution}
\SetKwData{ICls}{ICls}
\SetKwFunction{Solve}{Solve}
\SetKwInOut{Input}{input}
\SetKwInOut{Output}{output~}
\caption{Computation of invariant classes}\label{algo_invclass}
\Input{\CFamily: set of invariant clusters;\\ $\vx_0$: an initial point}
\Output{\ICls: list of invariant classes}
\BlankLine

\ICls $\leftarrow \emptyset$\;
\ForEach{$\clus \in \CFamily$}
{
     $\D \leftarrow \class(\clus,\vx_0)$\;
     \If{$\D \neq \emptyset$}
     {
        $m\leftarrow$ the dimension of the hyperplane $g(\vu,\vx_0)=0$ defining $\D$\;
        \If{$m\geq 1$ }
        {
            $\Basis \leftarrow$ basis $\{u_1,\dots,u_{m}\}$ of the hyperplane $g(\vu,\vx_0)=0$\;
            $\D \leftarrow$ the polynomials $\{g(\vu_1,\vx),\dots,g(\vu_m,\vx)\}, u_i\in \Basis$\;
        }
        $\ICls \leftarrow \ICls \cup \D$\;
     }
}
\end{algorithm}
%\decmargin{1em}
\begin{remark}
In Algorithm~\ref{algo_invclass}, we enumerate the invariant clusters to find out which one can provide a non-empty invariant class $\class(\clus,\vx_0)$ for $\vx_0$. For a $\class(\clus,\vx_0)$ to be nonempty, the corresponding hyperplane $g(\vu,\vx_0)=0$ must have at least one solution to $\vu\in\R^K\setminus\{\vect{0}\}$, which is equivalent to that its dimension must be at least~$1$. For a hyperplane in $\R^K$, its dimension is equal to $K-1$. Therefore, $\class(\clus,\vx_0)$ must be nonempty if $K>1$ and the basis of the hyperplane can be obtained through a basic linear algebraic computation (which will be illustrated in what follows). However, in the case of $g(\vu,\vx_0)$ being identical to $0$, the hyperplane degenerates to the space $\R^K\setminus \{\vect{0}\}$ and the dimension will be~$K$. Therefore, an invariant class with $K=1$ is nonempty iff $g(\vu,\vx_0)$ is identical to $0$. For example, given an invariant cluster $C^0=\{u_1(x-y)=0 \mid u_1\in\R\setminus \{0\}\}$ and a point $\vect{x}_0=(x_0,y_0)$, $\class(C^0,\vect{x}_0)$ is empty if $x_0\neq y_0$, however, $\class(C^0,\vect{x}_0)$ is equal to $C^0$ if $x_0=y_0$.
\end{remark}

\begin{example}
We continue from Example~\ref{example8}. For the given point $\vx_0=(1,2,3)$, according to Algorithm~\ref{algo_invclass}, we find that only $\class(C_7,\vx_0)=\{g_7(\vu,x,y,z)=0\mid 3u_5 + 8u_6 + u_0 = 0, \vu\in \R^3\setminus\{\vect{0}\}\}$ is nonempty. The dimension of the hyperplane $H:3u_5 + 8u_6 + u_0 = 0$ is~$2$. Since $u_0=-3u_5 -8u_6$, to get the basis of $H$, we can write
$
(u_0,u_5,u_6)=(-3u_5 -8u_6,u_5,u_6) = u_5(-3,1,0) + u_6(-8,0,1).
$
 Hence, we have the following basis for $H$: $\{(-3,1,0), (-8,0,1)\}$. As a result, we get the finite representation $B=\{y^2-x^2-3=0$, $z^2-x^2-8=0\}$ for $\class(C_7,\vx_0)$. It is easy to check by the \emph{Maple} function \texttt{HilbertDimension} that $dim(B)=1$. Therefore, we finally get an algebraic variety $\V(B)$ which provides in part a curve-to-curve match to the trajectory $\pi_{\vx_0}$. The $3$-D vector field and the algebraic curve $\V(B)$ is shown in Figure~\ref{fig:example81} and Figure~\ref{fig:example82}, respectively.
\end{example}

\begin{figure}[t!]
% \hfill
\begin{subfigure}[b]{0.495\linewidth}
%   \vspace{0cm}
  \centering
  \includegraphics[height=40mm,keepaspectratio]{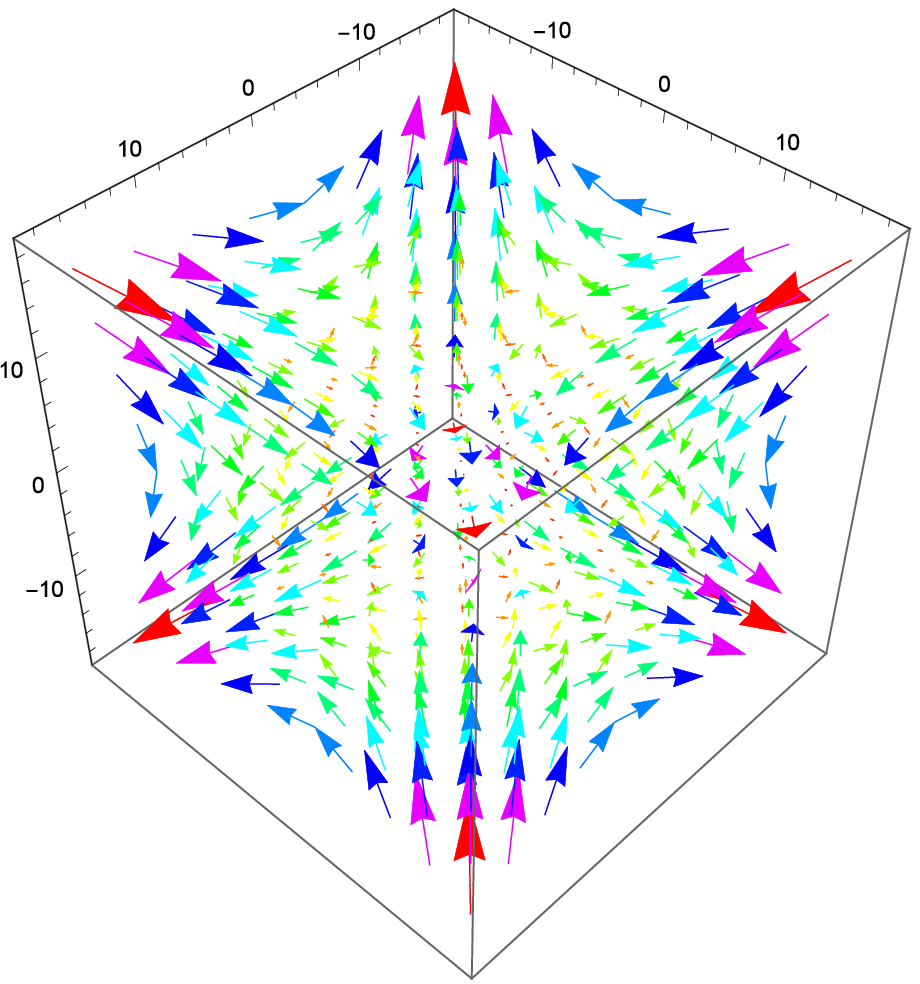}
%   \caption{3D vector field of Example~\ref{example8}.}
  \caption{}
  \label{fig:example81}
\end{subfigure}
% \hfill\quad\hfill
\begin{subfigure}[b]{0.495\linewidth}
%   \vspace{0pt}
  \centering
  \includegraphics[height=40mm,keepaspectratio]{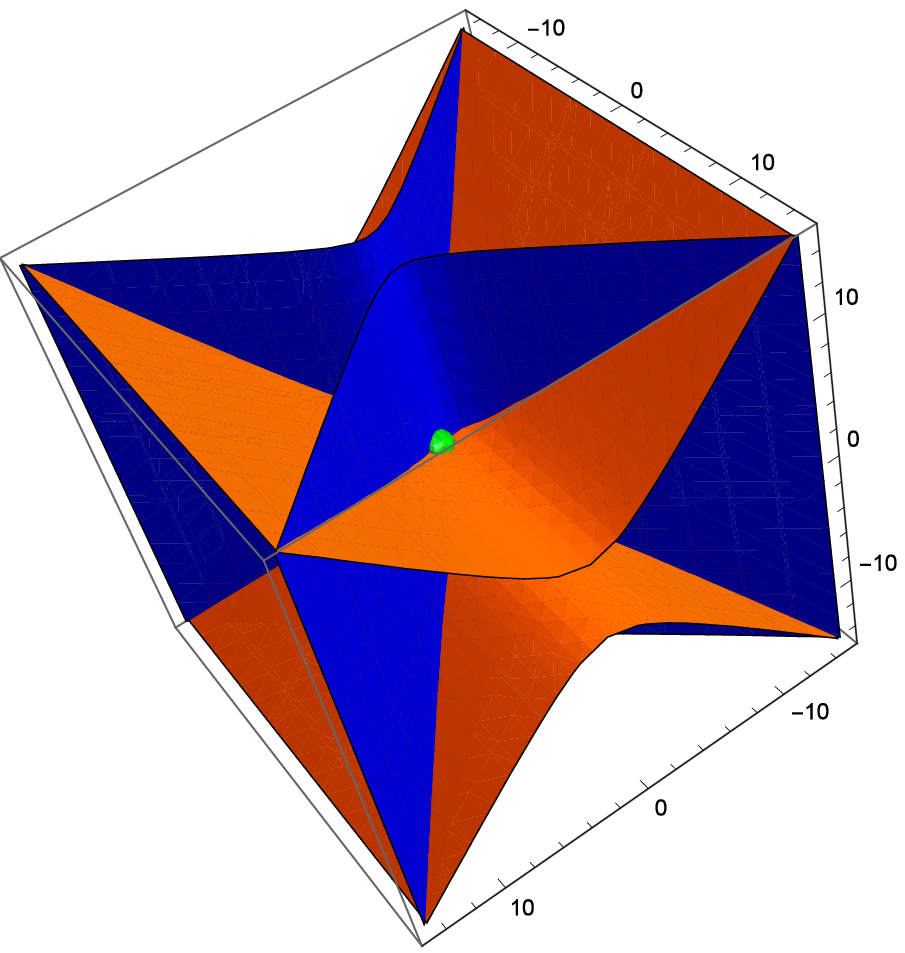}
%   \caption{Example~\ref{example8}. The intersection of $y^2-x^2-3=0$ (blue) and $z^2-x^2-8=0$ (orange) overapproximates the trajectory originating from $\vx_0=(1,2,3)$ (green ball).}
  \caption{}
  \label{fig:example82}
\end{subfigure}
\caption{(a) 3D vector field of Example~\ref{example8}.
(b) The intersection of the invariants $y^2-x^2-3=0$ (blue) and $z^2-x^2-8=0$ (orange) overapproximates the trajectory originating from $\vx_0=(1,2,3)$ (green ball).}
% \hfill\
\end{figure}

%
%Now, we present two examples to demonstrate the application of the invariant polynomial to partitioning state space.
%\begin{example}\label{example5}
%Given the semialgebraic system $\sys$,
%\begin{gather*}
%\begin{bmatrix}\dot{x} \\ \dot{y}\end{bmatrix} = \begin{bmatrix} xy + 4x \\ x^2 - xy\end{bmatrix}
%\end{gather*}
%find the invariant polynomials of $\sys$.
%\end{example}
%
%By setting the template polynomial to $g(x) = u_0 + u_1x + u_2y$, we can get the following system of polynomial equations
%\begin{equation*}
%  \CS: \left\{\begin{array}{lc}
%       \frac{u_0^2u_2-4u_0u_1^2}{u_1^2}=0\\
%       \frac{-u_1^2u_2+u_1u_2^2+u_2^3}{u_1^2}=0\\
%       \frac{-u_0u_1^2+u_0u_1u_2+2u_0u_2^2-4u_1^2u_2}{u_1^2}=0
%       \end{array}\right.
%\end{equation*}
%By solving the above system of equations, we get three groups of real solutions:
%\begin{equation*}
%\begin{split}
%  &S_1:= \{u_0 = u_2 = 0, u_1 = u_1\}, \\
%  &S_2:= \{u_0 = 2(1+\sqrt{5})u_2 + 4u_2, d1 = \frac{1+\sqrt{5}}{2}u_2, u_2 = u_2\},\\
%  &S_3:= \{u_0 = 2(1-\sqrt{5})u_2 + 4u_2, d1 = \frac{1-\sqrt{5}}{2}u_2, u_2 = u_2\}
%\end{split}
%\end{equation*}
%These solutions yield $3$ linear functions. As shown in Figure~\ref{fig_example5}, the invariant polynomials partition the state space into $19$ invariant regions (some of them are line segments or points).
%\begin{figure}[!t]
% \centering
%  \includegraphics[scale=0.3]{example5.pdf}
%\caption{Invariant partition by $3$ linear invariant polynomials.}
%\label{fig_example5}
%\end{figure}

\section{Safety Verification Based on Invariant Clusters}\label{sec:verification}

\subsection{Safety Verification of Continuous Systems}
In this section, we show how to verify a safety property for a nonlinear system based on invariant clusters.

In Section~\ref{sec:computation}, we have demonstrated that a trajectory can be overapproximated by an invariant class.
Since an invariant class is determined uniquely by a single hyperplane $g(\vect{u},\vect{x}_0)=0$ in $\R^K$ for an initial point $\vect{x}_0$, and a hyperplane without constant term (this is the case for $g(\vect{u},\vect{x}_0)=0$) is uniquely determined by its normal vector, we can verify if two states lie on the same trajectory based on the following theorem.

\begin{theorem}\label{thrm8}
Given a semialgebraic system $\sys=\consys$ and an invariant cluster $\clus=\{g(\vect{u},\vect{x})=0 \mid \vect{u}=(u_1,\cdots,u_K)\in \R^K\setminus \{\vect{0}\}\}$ of $\sys$ with $K>1$, where $g(\vect{u},\vect{x})= \sum_{i=1}^{K} \psi_i(\vect{x})u_i$ and $\psi_i(\vect{x})\in \R[\vect{x}]$, let \Init be the initial set and \Uns be the unsafe set. Then, if there exists a pair of states $(\vect{x}_1,\vect{x}_2)\in \Init\times\Uns$ such that $\vect{x}_1$ and $\vect{x}_2$ lie on the same trajectory, one of the following two formulae must hold:
\begin{enumerate}
\item[(i)] \parbox{\linewidth}{%
\begin{equation}
\label{normalvec}
\begin{gathered}
\exists k\in \R\backslash\{0\}:\ k \psi_i(\vect{x}_1) = \psi_i(\vect{x}_2), i = 1,\dots,K
\end{gathered}
\end{equation}
}
\item[(ii)] \parbox{\linewidth}{%
\begin{equation}\label{zeronormalvec}
\psi_i(\vect{x}_1) = \psi_i(\vect{x}_2) = 0, i=1,\dots,K
\end{equation}
}
\end{enumerate}

Moreover, if some $\psi_i(\vect{x})\equiv 1$, i.e. $g(\vect{u},\vect{x})$ contains a constant term $u_i$, then formula~\eqref{normalvec} simplifies to
\begin{equation}\label{normalvec1}
\psi_i(\vect{x}_1)) = \psi_i(\vect{x}_2), i = 1,\dots,K
\end{equation}
\end{theorem}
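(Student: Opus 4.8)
The plan is to exploit the one-to-one correspondence between an invariant class and the hyperplane it is carved out of. Concretely, I would first recall from Definition~\ref{def_inst} that the invariant class of $\clus$ at a point $\vect{x}$ is $\class(\clus,\vect{x}) = \{g(\vect{u},\vect{x})=0 \mid g(\vect{u},\vect{x})=0,\ \vect{u}\in\R^K\setminus\{\vect{0}\}\}$, and that here $g(\vect{u},\vect{x}) = \sum_{i=1}^K \psi_i(\vect{x})u_i$ is \emph{linear} in $\vect{u}$. Hence fixing $\vect{x}=\vect{x}_0$ yields the linear equation $\sum_{i=1}^K \psi_i(\vect{x}_0)u_i = 0$ in $\vect{u}$, i.e.\ a hyperplane (or all of $\R^K$) whose normal vector is exactly $\Psi(\vect{x}_0) \defeq (\psi_1(\vect{x}_0),\dots,\psi_K(\vect{x}_0))$.

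The key step is then: if $\vect{x}_1\in\HInit$ and $\vect{x}_2\in\HUns$ lie on the same trajectory $\pi$, I want to show their associated hyperplanes coincide. Since every member of $\class(\clus,\vect{x}_1)$ is an invariant for the trajectory originating at $\vect{x}_1$ (this follows from Definition~\ref{def_invcluster} via Proposition~\ref{prop9}, as noted after Definition~\ref{def_inst}), and $\vect{x}_2$ is on that same trajectory, every polynomial $g(\vect{u},\vect{x})$ with $g(\vect{u},\vect{x}_1)=0$ also satisfies $g(\vect{u},\vect{x}_2)=0$; symmetrically, running the trajectory the other way (or using Theorem~\ref{thrm7}\eqref{item1} together with uniqueness of solutions from Lipschitz continuity), the reverse inclusion holds, so the two solution sets in $\vect{u}$ are equal. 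Thus $\{\vect{u} : \langle \Psi(\vect{x}_1),\vect{u}\rangle = 0\} = \{\vect{u} : \langle \Psi(\vect{x}_2),\vect{u}\rangle = 0\}$ as subsets of $\R^K$.

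From equality of these two linear subspaces I would conclude the dichotomy. If both $\Psi(\vect{x}_1)$ and $\Psi(\vect{x}_2)$ are the zero vector, we are in case~(ii), giving \eqref{zeronormalvec}. Otherwise both are nonzero (a zero vector gives the whole space as solution set, a nonzero one gives a proper hyperplane, so they cannot be mixed), and two nonzero vectors defining the same hyperplane in $\R^K$ must be scalar multiples of one another: there is $k\in\R\setminus\{0\}$ with $\Psi(\vect{x}_2) = k\,\Psi(\vect{x}_1)$, which is exactly \eqref{normalvec}. For the final ``moreover'' clause, if some $\psi_i(\vect{x})\equiv 1$, then the $i$-th coordinate of \eqref{normalvec} reads $k\cdot 1 = 1$, forcing $k=1$, and substituting $k=1$ back collapses \eqref{normalvec} to \eqref{normalvec1}.

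I expect the main subtlety to be the bidirectional implication in the middle step: arguing cleanly that ``$\vect{x}_1,\vect{x}_2$ on the same trajectory'' forces the two hyperplanes to be \emph{equal} rather than merely one containing the other. The cautious way to handle it is to invoke Theorem~\ref{thrm7}\eqref{item1} in both directions — $\pi_{\vect{x}_1}\subseteq\V(D_g^{(1)})$ and $\pi_{\vect{x}_2}\subseteq\V(D_g^{(2)})$ — combined with the fact that, because $\vect{f}$ is Lipschitz, the trajectories through $\vect{x}_1$ and through $\vect{x}_2$ are (forward-backward) the same curve, so each of $\vect{x}_1,\vect{x}_2$ satisfies every invariant attached to the other; everything else is elementary linear algebra over $\R^K$.
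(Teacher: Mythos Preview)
Your proposal is correct and follows essentially the same approach as the paper: view $g(\vect{u},\vect{x}_0)=0$ as a hyperplane in $\vect{u}$-space with normal vector $(\psi_1(\vect{x}_0),\dots,\psi_K(\vect{x}_0))$, argue that two points on the same trajectory determine the same hyperplane, and then read off the dichotomy from elementary linear algebra. If anything, your write-up is more careful than the paper's own proof, which simply asserts that points on the same trajectory ``must correspond to the same hyperplane'' without spelling out the two-sided inclusion via Theorem~\ref{thrm7} and uniqueness of solutions that you highlight.
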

\begin{proof}
  See Appendix~\ref{proofofthrm8}.
\end{proof}

\begin{remark}
Instead of computing the invariants explicitly, Theorem~\ref{thrm8} provides an alternative way to verify whether two states $\vect{x}_1,\vect{x}_2$ lie on the same trajectory by checking that the difference between the normal vectors of $g(\vect{u},\vect{x}_1)=0$ and $g(\vect{u},\vect{x}_2)=0$. Let us take Example~\ref{example8} for illustration. We think of $\clus_7$ as a hyperplane over $\vu\in\R^3$: $u_0+(y^2-x^2)u_5 + (z^2-x^2)u_6=0$, hence the corresponding normal vector is $\vect{\mathcal{N}}(\vx)=(1,y^2-x^2,z^2-x^2)$. Given two random points $\vect{x}_1=(1,2,3)$ and $\vect{x}_2=(5,\sqrt{27},\sqrt{34})$, it is easy to verify that $\vect{\mathcal{N}}(\vect{x}_1)\neq \vect{\mathcal{N}}(\vect{x}_2)$, which means that $\vect{x}_1$ and $\vect{x}_2$ are not on the same trajectory. This can be verified in another way, as we know that the invariant class of $\vect{x}_1$ is $\{y^2-x^2-3=0, z^2-x^2-8=0\}$ and $\vect{x}_2$ does not belong to its solution set.
\end{remark}

Now we demonstrate how to verify a safety property of semialgebraic systems. Assume \Init and \Uns can be written as semialgebraic sets, i.e., $\Init=\{\vx_1\in \R^n \mid p_{i_1}(\vx_1)=0,q_{j_1}(\vx_1)\geq 0,r_{k_1}(\vx_1)>0, i_1=1,\dots, l_1$, $j_1=1\dots m_1$, $k_1=1,\dots, n_1\}$ and $\Uns=\{\vx_2 \in \R^n \mid p_{i_2}(\vx_2)=0,q_{j_2}(\vx_2)\geq 0,$ $r_{k_2}(\vx_2)>0$, $i_2=l_1+1,\dots, l_1+l_2$, $j_2=m_1+1,\dots, m_1+m_2$, $k_2=n_1+1, \dots, n_1+n_2\}$. Then we have the following theorem for deciding the safety of a semialgebraic system.
\begin{theorem}\label{thrm2}
Given a semialgebraic system $\sys=\consys$ and invariant cluster $\clus=\{g(\vect{u},\vect{x})=0 \mid \vect{u}\in \R^K\setminus\{\vect{0}\}\}$ of $\sys$ with $K\geq 2$. Suppose the normal vector of the hyperplane $g(\vect{u},\vect{x})=0$ over $\vu$ is $(1,\psi_1(\vect{x}), \dots ,\psi_K(\vect{x}))$. Then the system $\sys$ is safe if there exists the following polynomial identity
\vspace*{-1mm}
\begin{align}\label{eq12}
% \begin{aligned}
  &\sum_{k=1}^K \gamma_k\cdot (\psi_k(\vect{x}_1)-\psi_k(\vect{x}_2)) + \sum_{i=1}^{l_1+l_2} \beta_i p_i \nonumber \\
  &+\hspace*{-1mm} \sum_{v\in \{0,1\}^{m_1+m_2}}(\sum_t b_{tv})^2\cdot \hspace*{-1mm} \prod_{j=1}^{m_1+m_2} q_j^{v_j}\\
  &+\hspace*{-1mm} \sum_{v\in \{0,1\}^{n_1+n_2}}(\sum_t c_{tv})^2\cdot \hspace*{-1mm} \prod_{k=1}^{n_1+n_2} r_k^{v_k} + \sum_w s_w^2 + \hspace*{-1mm} \prod_{k=1}^{n_1+n_2}r_k^{d_k} = 0 \nonumber
% \end{aligned}
\end{align}
\vspace*{-1mm}
where $d_k\in \N$ and $\beta_j, \gamma_k, b_{tv}, c_{tv}, s_w$ are polynomials in $\R[\vx_1,\vx_2]$.
\end{theorem}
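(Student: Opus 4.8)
The plan is to argue by contradiction, reducing unsafety of $\sys$ to the solvability of a single semialgebraic system over the doubled variable set $(\vx_1,\vx_2)\in\R^{2n}$, which is then refuted by the Real Nullstellensatz (Theorem~\ref{citedthrm1}, cf.\ Remark~\ref{remark_sos}). So suppose $\sys$ is not safe. Then there is a trajectory $\vx(t)$ of $\sys$ and a time $\tau\ge 0$ with $\vx(0)\in\Init$ and $\vx(\tau)\in\Uns$. Set $\vx_1\defeq\vx(0)$ and $\vx_2\defeq\vx(\tau)$; then $(\vx_1,\vx_2)\in\Init\times\Uns$, the point $\vx_1$ satisfies the defining relations of $\Init$ (the equalities $p_{i_1}=0$, the inequalities $q_{j_1}\ge 0$, the strict inequalities $r_{k_1}>0$), the point $\vx_2$ satisfies those of $\Uns$, and $\vx_1,\vx_2$ lie on the same trajectory.

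Next I would apply Theorem~\ref{thrm8} to this pair; its hypothesis $K>1$ is met since $K\ge 2$. Because by assumption the normal vector of $g(\vu,\vx)=0$ over $\vu$ has a component identically equal to $1$, i.e.\ $g$ carries a genuine constant term, the degenerate alternative~\eqref{zeronormalvec} cannot occur — it would force $1=0$ — so alternative~\eqref{normalvec} holds and, by the ``moreover'' clause, simplifies to~\eqref{normalvec1}. Hence $\psi_k(\vx_1)=\psi_k(\vx_2)$ for $k=1,\dots,K$; equivalently, $(\vx_1,\vx_2)$ is a common real zero of the polynomials $\psi_k(\vx_1)-\psi_k(\vx_2)\in\R[\vx_1,\vx_2]$.

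Collecting the above, $(\vx_1,\vx_2)$ is a real solution of the polynomial system consisting of the equalities $\psi_k(\vx_1)-\psi_k(\vx_2)=0$ ($k=1,\dots,K$) and $p_i=0$ ($i=1,\dots,l_1+l_2$), the non-strict inequalities $q_j\ge 0$ ($j=1,\dots,m_1+m_2$), and the strict inequalities $r_k>0$ ($k=1,\dots,n_1+n_2$). By Theorem~\ref{citedthrm1}, instantiated over the variables $(\vx_1,\vx_2)$, no identity of the shape~\eqref{thrm1eq} can hold for this system. But~\eqref{eq12} is precisely such an identity: the equality multipliers $\beta_i p_i$ of~\eqref{thrm1eq} are realised here by the terms $\gamma_k(\psi_k(\vx_1)-\psi_k(\vx_2))$ together with the $\beta_i p_i$, while the $q_j$, the $r_k$, the remaining sum-of-squares terms ($b_{tv}$, $c_{tv}$, $s_w$) and the exponents $d_k$ appear verbatim. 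This contradicts the assumed existence of~\eqref{eq12}, so $\sys$ must be safe.

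I expect the only real obstacle to be careful bookkeeping rather than any deep step: one must verify that the hypotheses of Theorem~\ref{thrm8} are exactly in force for the endpoints $\vx_1,\vx_2$ (in particular that the presence of the constant term genuinely rules out case~\eqref{zeronormalvec} and forces~\eqref{normalvec1}), and one must exhibit the precise dictionary between~\eqref{eq12} and the Positivstellensatz certificate~\eqref{thrm1eq}, notably that the correct coefficient ring is $\R[\vx_1,\vx_2]$ and that the linear equality constraints $\psi_k(\vx_1)-\psi_k(\vx_2)=0$ are placed in the ``$p_i$'' slot. Once those correspondences are pinned down, the argument is just the contrapositive of Theorem~\ref{citedthrm1}.
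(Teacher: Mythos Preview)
Your proposal is correct and follows essentially the same route as the paper: apply the Real Nullstellensatz (Theorem~\ref{citedthrm1}) to conclude that the identity~\eqref{eq12} precludes any real solution of the combined system $\psi_k(\vx_1)-\psi_k(\vx_2)=0$, $p_i=0$, $q_j\ge 0$, $r_k>0$ over $(\vx_1,\vx_2)$, and then invoke Theorem~\ref{thrm8} to deduce that no pair $(\vx_1,\vx_2)\in\Init\times\Uns$ can lie on a common trajectory. Your treatment is in fact slightly more explicit than the paper's, which simply cites Theorem~\ref{thrm8} without spelling out that the constant component $1$ in the normal vector rules out alternative~\eqref{zeronormalvec} and forces the simplified form~\eqref{normalvec1}.
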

\begin{proof}
  See Appendix~\ref{proofofthrm2}.
\end{proof}

\begin{remark}
Theorem~\ref{thrm2} transforms the safety verification problem into a decision problem about the existence of a real solution of a system of polynomial equations and inequalities. As noted in Remark~\ref{remark_sos}, this decision problem can be solved by \SOS programming. Our implementation uses the efficient tool \emph{SOSTOOLS}~\cite{prajna2005sostools}. Appendix~\ref{sosprogramming} contains more information on \SOS programming.
\end{remark}

In Theorem~\ref{thrm2}, we deal with a general semialgebraic system where the initial set and the unsafe set are represented by a set of polynomial equations and inequalities. However, if the system is described by much simpler set representations such as a single polynomial equation or inequality, the programming problem can be simplified correspondingly. For example, if both sets can be represented or overapproximated by a single polynomial equation $I(\vx_1)=0$ and $U(\vx_2)=0$, respectively, then the programming problem is simplified to (see \cite{stengle1974nullstellensatz})
\vspace*{-3mm}
\begin{equation}\label{sosform}
%\begin{gathered}
  \sum_{j=1}^K\alpha_j(\psi_j(\vx_1)-\psi_j(\vx_2)) + \beta I + \theta U -1\quad \textrm{is an \SOS}
\vspace*{-2mm}
%\end{gathered}
\end{equation}
where $(\psi_1(\vx), \dots ,\psi_K(\vx))$ is the same as in Theorem~\ref{thrm2} and $\alpha_j, \beta, \theta\in \R[\vx_1,\vx_2]$.
For how to overapproximate a compact set we refer the reader to Appendix~\ref{overapprox}. The algorithm for safety verification
based on the condition~\eqref{sosform} is shown in Algorithm~\ref{algo_verification}.

\begin{algorithm}[t!]
\SetKwData{IsSafe}{IsSafe}
\SetKwData{True}{True}
\SetKwData{False}{False}
\SetKwData{Solution}{Solution}
\SetKwInOut{Input}{input}
\SetKwInOut{Output}{output~}
\caption{Safety verification}\label{algo_verification}
\Input{$\vect{\psi}$: the $K$-dimensional normal vector of an invariant cluster;\\I($\vx_1$): the initial set;\\ U($\vx_2$): the unsafe set;\\$N$: the maximum degree of programming polynomials $\vect{\alpha}, \beta, \theta$}
\Output{\IsSafe: whether the system is safe}
\BlankLine
%\emph{special treatment of the first line}\;
$\IsSafe\leftarrow \False$\;
\For{$i \leftarrow 1$ \KwTo $N$}{
$\vect{\alpha} \leftarrow $ generate a vector of polynomials of degree $i$ for $\psi$\;
$\beta \leftarrow $  generate a polynomial of degree $i$ for I($\vx_1$)\;
$\theta \leftarrow $ generate a polynomial of degree $i$ for U($\vx_2$)\;
$P \leftarrow \sum_{j=1}^K \alpha_j (\vect{\psi}_j(\vx_1)-\vect{\psi}_j(\vx_2)) +\beta$I$ + \theta$U $- 1$\;
$\Solution \leftarrow$ perform \SOS programming on $P$\;
\If{ \Solution is found}
{
   $\IsSafe\leftarrow \True$\;
   break\;
}
}
\end{algorithm}
%\decmargin{1em}

The next example demonstrates the application of the verification method.

\begin{example}[running example 2]\label{example9}
Given the semialgebraic system $\sys_3$ by
% \begin{gather*}
% \begin{bmatrix}\dot{x} \\ \dot{y}\end{bmatrix} = \begin{bmatrix} y^2 \\ xy\end{bmatrix},
% \end{gather*}
$
 \left[ \dot{x}, \dot{y} \right] = \left[ y^2, xy \right]
$
and the initial set be $\Init=\{(x,y)\in \R^2 \mid I(x,y)=(x+15)^2+(y-17)^2 -1\leq 0\}$, verify that if the unsafe set $\Uns=\{(x,y)\in \R^2 \mid U(x,y) = (x-11)^2+(y-16.5)^2-1 \leq 0\}$ can be reached.
The parameter space of $\clus^*=\{g(\vect{u},\vect{x})=u_1 - u_3(x^2 - y^2)=0 \mid (u_1,u_3)\in \R^2\setminus \{\vect{0}\}\}$ has dimension~$1$
and hence can provide an invariant class for every state in \Init and \Uns. The normal vector of the hyperplane $g(\vect{u},\vect{x})=0$ is $(1,\psi_1(x,y))=(1,y^2-x^2)$. Let $\varphi(x_1,y_1,x_2,y_2)=\psi_1(x_1,y_1)-\psi_1(x_2,y_2)$. By Theorem~\ref{thrm2}, to verify the safety property, we only need to verify that the following system of equations has no real solution.

\vspace*{-\baselineskip}
\begin{equation*}
\begin{split}
  &I(x_1,y_1) = (x_1+15)^2+(y_1-17)^2 -1 = 0\\
  &U(x_2,y_2)=(x_2-11)^2+(y_2-16.5)^2-1 = 0\\
  &\varphi(x_1,y_1,x_2,y_2) = y_1^2-x_1^2 -( y_2^2-x_2^2) = 0
\end{split}
\end{equation*}
Note that we substitute $(x_1,y_1)$, $(x_2,y_2)$ for $(x,y)$ in $I(x,y)$ and $U(x,y)$, respectively, to denote the different points in \Init and \Uns. To prove that the system is safe, we need to find $\alpha_i\in \R[x_1,y_1,x_2,y_2], i=1, 2, 3$ such that $\Prog = \alpha_1 I + \alpha_2 U + \alpha_3 \varphi -1$ is a sum-of-squares. Finally, we found three polynomials of degree $2$ for $\alpha_i$, respectively (see Appendix~\ref{verifydata} for the expressions of $\alpha_i$ and $\Prog$), hence the system is safe. As shown in Figure~\ref{fig_example9}, although the relative position of \Uns to the reachable set of \Init is very close, we can still verify the safety property using an invariant cluster. However, we failed to find a barrier certificate for this system by using the methods in~\cite{kong2013exponential,prajna2004safety}.
\end{example}

\smallskip

In Theorem~\ref{thrm2}, we present a sufficient condition for deciding if a semialgebraic system is safe. The theory originates from the fact that the system is safe if there is no invariant class intersecting both the initial and the unsafe set, which is equivalent to that the formula~\eqref{eq12} holds. To verify the latter, we need to find a set of witness polynomials by \SOS programming. However, as the dimension of the system increases, the number of parametric polynomials involved increases correspondingly, which also leads to an increase in computational complexity. In what follows, we present a new method for safety verification, which avoids the aforementioned problem. The new method is based on Proposition~\ref{prop9}, that is, for any polynomial $g(\vect{x})$ satisfying $\mathcal{L}_{\vect{f}} g\in \ideal{g}$, $g(x)\sim 0$ is an invariant for any $\sim\ \in \{<, \leq, =, \geq, >\}$.
\begin{proposition}\label{prop10}
Given a semialgebraic system $\sys=\consys$ and an invariant cluster $\clus=\{g(\vect{u},\vect{x})=0 \mid \vect{u}\in \R^K\setminus \{\vect{0}\}\}$ of $\sys$, let $\Init$ and $\Uns$ be the initial set and the unsafe set, respectively. Then, the system is safe if there exists a $\vect{u}^* \in \R^K\setminus \{\vect{0}\}$ such that
\begin{align}
 &\forall \vect{x} \in \Init: g(\vect{u}^*,\vect{x}) \geq 0 \label{cond1}\\
 &\forall \vect{x} \in \Uns: g(\vect{u}^*,\vect{x}) < 0 \label{cond2}
\end{align}
\end{proposition}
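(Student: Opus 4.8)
The plan is to prove the statement by contradiction, reducing it to Proposition~\ref{prop9} applied to the single concrete polynomial $g(\vect{u}^*,\vx)\in\R[\vx]$ obtained by instantiating the parameter of the cluster at the witness value $\vect{u}^*$.

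The first step is to make this instantiation legitimate. By Definition~\ref{def_invcluster} the cluster polynomial $g(\vect{u},\vx)$ satisfies $\Lfg\in\ideal{g}$, which by Proposition~\ref{prop8} means $\Lfg = p\,g$ for a suitable $p$. I would read this as an identity over $\R[\vect{u},\vx]$ (should the quotient $p$ produced by the remainder computation of Subsection~\ref{sec_compute} still carry a power of the leading coefficient $u_j$ in its denominator, multiply through by that power first; this does not affect the sign conditions~\eqref{cond1}--\eqref{cond2}). Evaluating at $\vect{u}=\vect{u}^*$ is then a ring homomorphism $\R[\vect{u},\vx]\to\R[\vx]$, and it commutes with $\Lf$ because $\Lf$ differentiates only with respect to $\vx$; hence $\Lf\big(g(\vect{u}^*,\vx)\big) = p(\vect{u}^*,\vx)\,g(\vect{u}^*,\vx)\in\ideal{g(\vect{u}^*,\vx)}$. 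So $g(\vect{u}^*,\vx)$ satisfies condition~\eqref{eq10}, and Proposition~\ref{prop9} tells us that $g(\vect{u}^*,\vx)\geq 0$ is an invariant of $\sys$: every trajectory starting in the superlevel set $S\defeq\{\vx\mid g(\vect{u}^*,\vx)\geq 0\}$ stays in $S$ for all future time.

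Now assume for contradiction that $\sys$ is not safe, so there is a trajectory $\vx(t)$ with $\vx(0)\in\Init$ and $\vx(\tau)\in\Uns$ for some $\tau\geq 0$ in the domain of $\vx(\cdot)$. Hypothesis~\eqref{cond1} gives $\vx(0)\in S$, so by invariance of $S$ we get $\vx(\tau)\in S$, i.e.\ $g(\vect{u}^*,\vx(\tau))\geq 0$. But $\vx(\tau)\in\Uns$ together with hypothesis~\eqref{cond2} gives $g(\vect{u}^*,\vx(\tau))<0$ --- a contradiction. Therefore $\sys$ is safe.

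The argument is short, and the only step that needs any care is the instantiation: one must be certain that freezing $\vect{u}$ at a concrete value preserves the ideal membership $\Lfg\in\ideal{g}$, which is why it is cleanest to phrase it through the evaluation homomorphism after clearing any leading-coefficient denominator. A harmless secondary point is that I invoke the convention, implicit in the definition of safety, that a safety-violating trajectory is defined on an interval containing the violation time $\tau$, so that $g(\vect{u}^*,\vx(\tau))$ is defined and Proposition~\ref{prop9} can be applied up to time $\tau$.
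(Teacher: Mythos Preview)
Your argument is correct and follows essentially the same route as the paper: instantiate the cluster at $\vect{u}^*$, invoke Proposition~\ref{prop9} to conclude that $g(\vect{u}^*,\vx)\geq 0$ is an invariant, and then combine conditions~\eqref{cond1} and~\eqref{cond2} to rule out any trajectory from $\Init$ reaching $\Uns$. The paper's proof is shorter only because it glosses over the instantiation step you justify carefully, and it phrases the final step directly rather than by contradiction.
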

\begin{proof}
  See Appendix~\ref{proofofprop10}.
\end{proof}

According to Proposition~\ref{prop10}, to verify the safety property, it suffices to find a $\vect{u}^*\in \R^K\setminus \{\vz\}$ which satisfies the constraints~\eqref{cond1} and \eqref{cond2}. There are some constraint solving methods available, e.g, \emph{SMT} solvers. However, the high complexity of \emph{SMT} theory limits the applicability of the method. In the following, we transform the above constraint-solving problem into an \SOS programming problem, which can be solved efficiently. We write $\vect{P}(\vx)\succeq \vz$ to denote $p_i(\vx)\geq 0, i=1,\dots, m$ for a polynomial vector $\vect{P}(x)=(p_1(\vx),\dots, p_m(\vx))$.

\begin{proposition}\label{prop11}
Given a semialgebraic system $\sys=\consys$ and an invariant cluster $\clus=\{g(\vect{u},\vect{x})=0 \mid $, $\vect{u}\in\R^K\setminus \{\vz\}\}$ of $\sys$ and a constant $\epsilon \in \R_{>0}$, let $\Init=\{\vect{x}\in\R^n \mid \vect{I} \succeq \vz, \vect{I}\in \R[\vect{x}]^{m_1}\}$ and $\Uns=\{\vect{x}\in \R^n \mid \vect{U} \succeq \vz, \vect{U}\in \R[\vect{x}]^{m_2}\}$. Then, the system is safe if there exist a $\vect{u}^*\in \R^K\setminus\{\vz\}$ and two \SOS polynomial vectors $\vect{\mu}_1\in \R[\vect{x}]^{m_1}$ and $\vect{\mu}_2\in \R[\vect{x}]^{m_2}$ such that the following polynomials are \SOS polynomials.
\begin{align}
  &\quad g(\vect{u}^*,\vect{x}) - \vect{\mu}_1 \cdot \vect{I} \label{cond21}\\
  &-g(\vect{u}^*,\vect{x}) - \vect{\mu}_2\cdot \vect{U} - \epsilon \label{cond22}
\end{align}
\end{proposition}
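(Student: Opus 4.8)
The plan is to deduce Proposition~\ref{prop11} directly from Proposition~\ref{prop10}. Fixing the parameter vector $\vect{u}^*\in\R^K\setminus\{\vz\}$ furnished by the hypothesis, $g(\vect{u}^*,\vect{x})$ is a concrete polynomial in $\R[\vect{x}]$, and it suffices to show that it satisfies the two pointwise conditions~\eqref{cond1} and~\eqref{cond2}; the safety conclusion then follows verbatim from Proposition~\ref{prop10}. Throughout I would use two elementary facts: an \SOS polynomial is nonnegative on all of $\R^n$, and the inner product of two componentwise-nonnegative real vectors is nonnegative.

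First I would verify~\eqref{cond1}. Let $\vect{x}\in\Init$, so $\vect{I}(\vect{x})\succeq\vz$. Since every component of $\vect{\mu}_1$ is an \SOS polynomial, $\vect{\mu}_1(\vect{x})\succeq\vz$, hence $\vect{\mu}_1(\vect{x})\cdot\vect{I}(\vect{x})\geq 0$. Evaluating the \SOS polynomial~\eqref{cond21} at $\vect{x}$ gives $g(\vect{u}^*,\vect{x})-\vect{\mu}_1(\vect{x})\cdot\vect{I}(\vect{x})\geq 0$, and adding the two inequalities yields $g(\vect{u}^*,\vect{x})\geq 0$, which is~\eqref{cond1}.

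Next I would verify~\eqref{cond2} by the same reasoning applied to~\eqref{cond22}. For $\vect{x}\in\Uns$ we have $\vect{U}(\vect{x})\succeq\vz$ and, by the \SOS property, $\vect{\mu}_2(\vect{x})\succeq\vz$, so $\vect{\mu}_2(\vect{x})\cdot\vect{U}(\vect{x})\geq 0$. Evaluating the \SOS polynomial~\eqref{cond22} at $\vect{x}$ gives $-g(\vect{u}^*,\vect{x})-\vect{\mu}_2(\vect{x})\cdot\vect{U}(\vect{x})-\epsilon\geq 0$, i.e. $-g(\vect{u}^*,\vect{x})\geq \vect{\mu}_2(\vect{x})\cdot\vect{U}(\vect{x})+\epsilon\geq \epsilon>0$, so $g(\vect{u}^*,\vect{x})<0$, which is~\eqref{cond2}. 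Applying Proposition~\ref{prop10} with this $\vect{u}^*$ then gives safety of $\sys$.

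I do not expect a genuine obstacle: the argument is a routine Positivstellensatz-style certificate manipulation, and all the substantive content---that the pointwise sign conditions on a member of the invariant cluster already imply safety---is carried by Proposition~\ref{prop10}. The only point deserving care is the strict inequality required on $\Uns$: it is precisely the subtracted constant $\epsilon>0$ in~\eqref{cond22} that upgrades the nonnegativity guaranteed by an \SOS certificate to the strict separation $g(\vect{u}^*,\vect{x})\leq -\epsilon<0$ demanded by~\eqref{cond2}.
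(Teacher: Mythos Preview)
Your proposal is correct and follows essentially the same route as the paper: derive the pointwise sign conditions~\eqref{cond1} and~\eqref{cond2} from the \SOS certificates~\eqref{cond21} and~\eqref{cond22} via nonnegativity of \SOS polynomials and of the inner product $\vect{\mu}_i\cdot\vect{I}$ (resp.\ $\vect{\mu}_i\cdot\vect{U}$), then invoke Proposition~\ref{prop10}. Your write-up is in fact cleaner than the paper's appendix proof, which contains a sign slip in its first line.
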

\begin{proof}
  See Appendix~\ref{proofofprop11}.
\end{proof}

Similar to Theorem~\ref{thrm2}, Proposition~\ref{prop11} also reduces to an \SOS programming problem. However, the ideas behind these two theories are different in that by Theorem~\ref{thrm2} we attempt to prove no invariant class which overapproximates trajectory can intersect both $\Init$ and $\Uns$, while by Proposition~\ref{prop11} we mean to find a hypersurface which is able to separate the reachable set of $\Init$ from the unsafe set $\Uns$. Apparently, there must exist no invariant class intersecting both $\Init$ and $\Uns$ if there exists such a hypersurface, but not vice versa. Hence the latter is more conservative than the former, but it is also more efficient in theory because it usually involves less unknown polynomials. For example, for an $n$-dimensional system with $\Init$ and $\Uns$ defined by a single polynomial inequality, respectively, we usually need $n+1$ unknown polynomials for the former method, however, we need only $2$ for the latter. See Algorithm~\ref{algo_prop11} for the pseudocode of the method based on Proposition~\ref{prop11}.

\begin{algorithm}[t!]
\SetKwData{IsSafe}{IsSafe}
\SetKwData{True}{True}
\SetKwData{False}{False}
\SetKwData{Solution}{Solution}
\SetKwInOut{Input}{input}
\SetKwInOut{Output}{output~}
\caption{Algorithm for safety verification based on \SOS programming}\label{algo_prop11}
\Input{$g(\vu,\vx)$: the invariant polynomial defining an invariant cluster; \\$\vect{I}$: the polynomial vector describing initial set;\\ $\vect{U}$: the polynomial vector describing unsafe set;\\$N$: the maximum degree of the polynomial vectors $\vect{\mu}_1,\vect{\mu}_2$;\\ $\epsilon$: a positive real number}
\Output{\IsSafe: whether the system is safe;\\
        $g(\vu^*,\vx)$, $\vect{\mu}_1$, $\vect{\mu}_2$: the feasible solution if \IsSafe;\\
        }
\BlankLine
%\emph{special treatment of the first line}\;
$\IsSafe\leftarrow \False$\;
\For{$i \leftarrow 0$ \KwTo $N$}{
$\vect{\mu}_1 \leftarrow $  generate a parametric polynomial vector of degree $2i$ for $\vect{I}$\;
$\vect{\mu}_2 \leftarrow $ generate a parametric polynomial vector of degree $2i$ for $\vect{U}$\;
$p_1 \leftarrow g(\vu,\vx) - \vect{\mu}_1\cdot \vect{I} $\;
$p_2 \leftarrow -g(\vu,\vx) - \vect{\mu}_2\cdot \vect{U} -\epsilon$\;
$\Solution \leftarrow$ perform \SOS programming on $\{\vect{\mu}_1,
\vect{\mu}_2, p_1, p_2\}$\;
\If{ \Solution is found}
{
   $\IsSafe\leftarrow \True$\;
   $[g(\vu^*,\vx),\vect{\mu}_1, \vect{\mu}_2] \leftarrow $ get the feasible solution from \Solution\;
   break\;
}
}
\end{algorithm}
%\decmargin{1em}
%

Let us use the following example to demonstrate the application of Algorithm~\ref{algo_prop11}.

\begin{example}\label{example10}
 Given the semialgebraic system $S_4$ by
 \begin{gather*}
 \begin{bmatrix}\dot{x} \\ \dot{y}\end{bmatrix} = \begin{bmatrix} y^2-2y \\ x^2+2x\end{bmatrix},
 \end{gather*}
%$
% \left[ \dot{x}, \dot{y} \right] = \left[ y^2, xy \right]
%$,
let the initial set be $\Init=\{(x,y)\in \R^2 \mid I(x,y)=1 - (x + 6.0)^2 - (y + 6.0)^2\geq 0\}$, decide whether the unsafe set $\Uns=\{(x,y)\in \R^2 \mid U(x,y) = 1 - (x - 8.2)^2 - (y - 4.0)^2 \geq 0\}$ can be reached.
\end{example}

By Algorithm~\ref{algo_invariantdiscovery}, we first get an invariant cluster $C_4=\{g(\vu,\vx)=\frac{1}{3}u_3x^3-\frac{1}{3}u_3y^3+u_3x^2+u_3y^2+u_1 = 0 \mid \vu\in \R^2\}$ for the system $S_4$, then by Algorithm~\ref{algo_prop11}, we find an invariant $g(\vu^*,\vx) = 0$ with $\vu^*=(-3081.9,7.1798)$ from the invariant cluster $C_4$: $g(\vu^*,\vx)=7.1798x^3 - 7.1798y^3 + 21.539x^2 + 21.539y^2- 3081.9$ and two polynomials $\mu_1(\vx), \mu_2(\vx)$ of degree $2$. The stream plot of $S_4$ and the plot of $g(\vu^*,\vx)=0$ are shown in Figure~\ref{fig_example10}. Note that we failed to find a barrier certificate by using the method in \cite{prajna2004safety} and \cite{kong2013exponential} for this system.

 \begin{figure}[!t]
  \centering
   \includegraphics[scale=0.5]{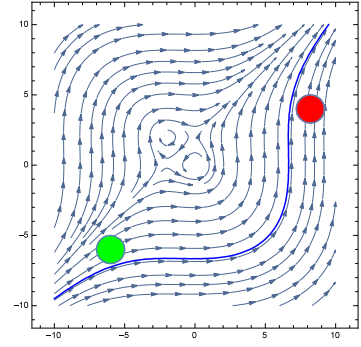}
 \caption{Example~\ref{example10}. Invariant $g(\vu^*,\vx)=0$ (blue curve) separating reachable set of $\Init$ (green patch) from $\Uns$ (red patch).}
 \label{fig_example10}
 \end{figure}

\subsection{Safety Verification of Hybrid Systems}\label{sec:hybridverification}
In this section, we extend the safety verification method for continuous systems based on invariant clusters to semialgebraic hybrid systems.

A hybrid system consists of a set of locations and a set of discrete transitions between locations. In general, different locations have different continuous dynamics and hence correspond to different invariant clusters. An invariant for the hybrid system can be synthesized from the set of invariant clusters of all locations. The idea is to pick out a polynomial $g_l(\vu_l^*,\vx)$ from the respective invariant cluster $C_l$ for each location $l$ such that $g_l(\vu_l^*,\vx) \geq 0$ is an invariant for the location $l$ and all the invariants coupled together by the constraints at the discrete transitions form a hybrid invariant for the hybrid system. The aforementioned idea is formalized in Proposition~\ref{prop12}.

\begin{proposition}\label{prop12}
Given an $n$-dimensional hybrid system $\mathcal{H}=\langle L,X,E,R,G,I,F\rangle$ and a set of invariant clusters $\{C_l, l=1,\dots, n\}$, where $C_l=\{g_l(\vu_l,\vect{x})=0 \mid \vu_l\in \mathbb{R}^{K_l} \setminus \{\vect{0}\}\}$ with $K_l>1$ is an invariant cluster for location $l$, the system $\mathcal{H}$ is safe if there exists a set $S_{\vu}=\{\vu_l^*\in \mathbb{R}^{K_l}\setminus \{\vz\}, l=1,\dots ,n\}$ such that, for all $l\in L$ and $(l,l')\in E$, the following formulae hold:
\begin{align}
 &\forall \vx \in \HInit(l): g_l(\vu_l^*,x) \geq 0\label{cond31}\\
 &\hspace*{-2pt}
 \begin{gathered}
 \forall \vx \in G(l,l'), \forall \vx' \in R((l,l'),\vx): g_l(\vu_l^*,\vx)\geq 0 \\
 \implies g_{l'}(\vu_{l'}^*,\vx')\geq 0 \label{cond32}\\
 \end{gathered} \\
 &\forall \vx \in I(l)\cap \HUns(l): g_l(\vu_l^*,\vx) < 0\label{cond33}
\end{align}
where $\HInit(l)$ and $\HUns(l)$ denote respectively the initial set and the unsafe set at location $l$.
\end{proposition}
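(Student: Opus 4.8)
The plan is to reduce safety of the hybrid system to the three local conditions by induction along trajectories, using Proposition~\ref{prop9} to convert each $g_l(\vu_l^*,\vx)\geq 0$ into a genuine continuous-time invariant at location $l$. First I would recall from Proposition~\ref{prop9} that, because $g_l(\vu_l^*,\vx)$ lies in the invariant cluster $C_l$ and hence satisfies $\Lf g_l \in \ideal{g_l}$, the set $\{\vx \mid g_l(\vu_l^*,\vx)\geq 0\}$ is forward-invariant under the flow $F(l)$: any trajectory segment that starts with $g_l(\vu_l^*,\vx)\geq 0$ keeps $g_l(\vu_l^*,\vx)\geq 0$ for as long as it stays in location $l$ (this is exactly the $\sim\ =\ \geq$ case of Proposition~\ref{prop9}). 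This is the continuous-time building block.

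Next I would set up the induction on the number of discrete transitions taken along a hybrid trajectory. Let $(l_0,\vx_0),(l_1,\vx_1),\dots$ be the sequence of location/state pairs at which the trajectory enters each successive mode, with $(l_0,\vx_0)\in \mathcal{X}_0$, so $\vx_0\in \HInit(l_0)$. The invariant I maintain is: whenever the trajectory is in location $l_j$ at some state $\vx$, we have $g_{l_j}(\vu_{l_j}^*,\vx)\geq 0$. The base case at $j=0$ follows from \eqref{cond31}, which gives $g_{l_0}(\vu_{l_0}^*,\vx_0)\geq 0$, and then the continuous-invariance building block propagates $g_{l_0}(\vu_{l_0}^*,\vx)\geq 0$ throughout the first flow segment. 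For the inductive step, suppose a discrete transition $(l_j,l_{j+1})\in E$ is taken from a state $\vx$ reached in $l_j$; by the guard semantics $\vx\in G(l_j,l_{j+1})$, and by the induction hypothesis $g_{l_j}(\vu_{l_j}^*,\vx)\geq 0$; the reset produces $\vx' \in R((l_j,l_{j+1}),\vx)$; applying \eqref{cond32} with this $\vx$ and $\vx'$ yields $g_{l_{j+1}}(\vu_{l_{j+1}}^*,\vx')\geq 0$. Thus the entry state into $l_{j+1}$ satisfies the predicate, and the continuous-invariance block again propagates it through the next flow segment, closing the induction.

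Finally I would derive safety. Suppose for contradiction that some hybrid trajectory reaches an unsafe state $(l,\vx)$ with $\vx \in \HUns(l)$; since the trajectory is in location $l$, the invariant condition $I(l)$ holds there as well, so $\vx\in I(l)\cap \HUns(l)$, and by \eqref{cond33} we get $g_l(\vu_l^*,\vx) < 0$. But the induction just established that every state visited in location $l$ satisfies $g_l(\vu_l^*,\vx)\geq 0$, a contradiction. Hence no trajectory reaches $\Uns$, and $\mathcal{H}$ is safe.

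The main obstacle is not any single calculation but making the gluing of continuous and discrete steps airtight: one must be careful that the continuous-invariance statement of Proposition~\ref{prop9} is applied only to the fixed polynomial $g_l(\vu_l^*,\cdot)$ (a concrete member of the cluster, so that $\Lf g_l\in\ideal{g_l}$ genuinely holds), that the flow segment on which it is invoked indeed lies in location $l$ with vector field $F(l)$, and that the guard/reset bookkeeping in \eqref{cond32} correctly matches the pre-transition state (where the guard and the induction hypothesis apply) with the post-reset state (where the next predicate must hold). The hypothesis $K_l>1$ only ensures the invariant clusters are nontrivial and is otherwise not needed for the argument. A secondary subtlety is a trajectory that stays in one location forever (Zeno-free but infinitely long, or simply non-transitioning): this is handled already by the base/continuous step with no further transitions, so the argument still goes through.
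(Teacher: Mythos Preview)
Your proposal is correct and follows essentially the same route as the paper: invoke Proposition~\ref{prop9} to get that each $g_l(\vu_l^*,\vx)\geq 0$ is flow-invariant at location $l$, then induct along the sequence of continuous segments and discrete transitions using \eqref{cond31} for the base, \eqref{cond32} for the jump step, and \eqref{cond33} for the final contradiction. If anything, your write-up is slightly more careful than the paper's in making the uses of \eqref{cond31} and \eqref{cond33} explicit.
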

\begin{proof}
See Appendix~\ref{proofprop12}.
\end{proof}

Similar to Proposition~\ref{prop10}, we further transform the problem into an \SOS programming problem. Consider a semialgebraic hybrid system $\mathcal{H} = \langle L,X,E,R,G,I,F\rangle$, where the mappings $R,G$, and $I$ are defined in terms of polynomial inequalities as follows:
\begin{itemize}
  \item $G:(l,l')\mapsto\{\vx\in\mathbb{R}^n \mid \vect{G}_{ll'}\succeq~0, \vect{G}_{ll'}\in\mathbb{R}[\vx]^{m_{ll'}}\}$
  \item $R:(l,l',\vx)\mapsto~\{\vx\in\mathbb{R}^n \mid \vect{R}_{ll'\vx}\succeq~0, \vect{R}_{ll'\vx}\in\mathbb{R}[\vx]^{n_{ll'}}\}$
  \item $ I:l\mapsto\{\vx\in\mathbb{R}^n\mid \vect{I}_l\succeq 0, \vect{I}_l\in\mathbb{R}[\vx]^{p_l}\}$
\end{itemize}
and the mappings of the initial and the unsafe set are defined as follows:
\begin{itemize}
  \item $\HInit:l\mapsto\{\vx\in\mathbb{R}^n \mid \vect{\HInit}_l\succeq 0, \vect{\HInit}_l\in\mathbb{R}[\vx]^{r_l}\}$
  \item $\HUns:l\mapsto\{\vx\in\mathbb{R}^n \mid \vect{\HUns}_l\succeq 0, \vect{\HUns}_l\in\mathbb{R}[\vx]^{s_l}\}$
\end{itemize}
where $m_{ll'}$, $n_{ll'}$, $r_l$, $p_l$ and $s_l$ are the dimensions of the polynomial vector spaces.
Then we have the following proposition for safety verification of the semialgebraic hybrid system $\mathcal{H}$.

\begin{proposition}\label{prop13}
Let the hybrid system $\mathcal{H}$, the initial set mapping $\HInit$, and the unsafe set mapping $\HUns$ be defined as above. Given a set of invariant clusters $\{C_l, l=1,\dots, n\}$ of $\mathcal{H}$, where $C_l=\{g_l(\vu_l,\vx)=0 \mid \vu_l\in \mathbb{R}^{K_l}\setminus \{\vect{0}\}\}$ with $K_l>1$ is an invariant cluster for location $l$, a set $S_{\gamma}=\{\gamma_{ll'}\in\mathbb{R}_{\geq 0}, (l,l')\in E\}$ of constants, and a constant vector $\vect{\epsilon} \in \R_{>0}^n$, the system is safe if there exists a set $S_u=\{\vu_l^*\in \mathbb{R}^{K_l}\setminus\{\vz\}, l=1,\dots, n\}$ and five sets of \SOS polynomial vectors $\{\vect{\theta}_l\in\mathbb{R}[\vx]^{s_l}, l\in L\}$, $\{\vect{\kappa}_{ll'} \in \mathbb{R}[\vx]^{p_{ll'}}, (l,l') \in E\}$, $\{\vect{\sigma}_{ll'}\in\mathbb{R}[\vx]^{q_{ll'}}, (l,l')\in E\}$, $\{\vect{\eta}_l\in\mathbb{R}[\vx]^{t_l}, l\in L\}$, and $\{\vect{\nu}_l\in\mathbb{R}[\vx]^{w_l}, l\in L\}$ such that the following polynomials are \SOS for all $l\in L$ and $(l,l')\in E$:
\begin{align}
&g_l(\vu_l^*,\vx) - \vect{\theta}_l\cdot \vect{\HInit}_l\label{contCond1}\\
&g_{l'}(\vu_{l'}^*,\vx') - \gamma_{ll'} g_l(\vu_l^*,\vx) -\vect{\kappa}_{ll'}\cdot G_{ll'}-\vect{\sigma}_{ll'} \cdot R_{ll'\vx} \label{contCond2}\\
&-\vect{\nu}_l\cdot I_l- \vect{\eta}_l\cdot \vect{\HUns}_l - g_l(\vu_l^*,\vx) - \epsilon_l \label{contCond3}
\end{align}
\end{proposition}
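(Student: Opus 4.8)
The plan is to reduce Proposition~\ref{prop13} to Proposition~\ref{prop12}: from a witness of the SOS conditions \eqref{contCond1}--\eqref{contCond3} I will extract a set $S_{\vu}=\{\vu_l^*\mid l=1,\dots,n\}$ and show that it satisfies the semantic conditions \eqref{cond31}--\eqref{cond33}, after which Proposition~\ref{prop12} yields safety of $\mathcal{H}$ directly. The only tools needed are the two elementary facts underlying Remark~\ref{remark_sos}: a sum-of-squares polynomial is nonnegative on all of $\R^n$, and hence the dot product of an SOS polynomial vector (every entry being SOS, thus nonnegative) with a componentwise-nonnegative vector is nonnegative at every point where the latter is $\succeq\vz$. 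I would state this convention explicitly at the outset, mirroring the use of $\vect{\mu}_1,\vect{\mu}_2$ in Proposition~\ref{prop11}.

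\textbf{Initial and unsafe conditions.} First I would treat the two ``static'' steps. If \eqref{contCond1} is SOS, then $g_l(\vu_l^*,\vx)-\vect{\theta}_l\cdot\vect{\HInit}_l\geq 0$ for all $\vx$; on $\HInit(l)$ we have $\vect{\HInit}_l\succeq\vz$, so $\vect{\theta}_l\cdot\vect{\HInit}_l\geq 0$ there, whence $g_l(\vu_l^*,\vx)\geq 0$ on $\HInit(l)$, which is \eqref{cond31}. Symmetrically, if \eqref{contCond3} is SOS then $-g_l(\vu_l^*,\vx)\geq \vect{\nu}_l\cdot I_l+\vect{\eta}_l\cdot\vect{\HUns}_l+\epsilon_l$ for all $\vx$; on $I(l)\cap\HUns(l)$ both $I_l\succeq\vz$ and $\vect{\HUns}_l\succeq\vz$, so the right-hand side is at least $\epsilon_l>0$, giving $g_l(\vu_l^*,\vx)\leq -\epsilon_l<0$, which is \eqref{cond33}.

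\textbf{Transition condition.} The step with genuine content is \eqref{cond32}, which is an implication and is handled by the standard S-procedure / Positivstellensatz relaxation. If \eqref{contCond2} is SOS, then
\[
g_{l'}(\vu_{l'}^*,\vx')\;\geq\;\gamma_{ll'}\,g_l(\vu_l^*,\vx)+\vect{\kappa}_{ll'}\cdot G_{ll'}+\vect{\sigma}_{ll'}\cdot R_{ll'\vx}
\]
for all $\vx,\vx'$. Now fix $\vx\in G(l,l')$ and $\vx'\in R((l,l'),\vx)$ with $g_l(\vu_l^*,\vx)\geq 0$. Then $G_{ll'}\succeq\vz$ and $R_{ll'\vx}\succeq\vz$, so the last two terms are $\geq 0$; and since $\gamma_{ll'}\in\R_{\geq 0}$ and $g_l(\vu_l^*,\vx)\geq 0$, the first term is $\geq 0$ as well. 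Hence $g_{l'}(\vu_{l'}^*,\vx')\geq 0$, which establishes \eqref{cond32}. With \eqref{cond31}--\eqref{cond33} verified for every $l\in L$ and $(l,l')\in E$, Proposition~\ref{prop12} applied to $S_{\vu}=\{\vu_l^*\}$ completes the proof.

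\textbf{Main obstacle.} I do not expect a substantial difficulty, since Proposition~\ref{prop12} does the heavy lifting; what remains is bookkeeping — pairing each SOS multiplier vector ($\vect{\theta}_l,\vect{\kappa}_{ll'},\vect{\sigma}_{ll'},\vect{\nu}_l,\vect{\eta}_l$) with the correct constraint vector and tracking signs. The one place that requires care is the transition step: it is essential that $\gamma_{ll'}$ is a \emph{nonnegative} constant so that it absorbs the hypothesis $g_l(\vu_l^*,\vx)\geq 0$ without reversing the inequality; allowing an SOS polynomial multiplier would work equally well, but the constant is sufficient and keeps the relaxation inexpensive. I would also reconcile the minor index/name mismatches in the statement (e.g.\ that $\vect{\theta}_l$ should have the length of $\vect{\HInit}_l$, and that $t_l,w_l,p_{ll'},q_{ll'}$ are the dimensions of the relevant constraint vectors) before presenting the formal proof.
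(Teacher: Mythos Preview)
Your proposal is correct and follows exactly the approach the paper takes: the paper's proof of Proposition~\ref{prop13} simply states that, ``similar to the proof of Proposition~\ref{prop11}, we can easily derive the formulae~\eqref{cond31}--\eqref{cond33} from the SOS's~\eqref{contCond1}--\eqref{contCond3}, respectively,'' and then invokes Proposition~\ref{prop12}. Your write-up supplies precisely the bookkeeping details the paper omits, including the crucial observation that $\gamma_{ll'}\geq 0$ is what makes the implication in \eqref{cond32} go through.
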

\begin{proof}
Similar to the proof of Proposition~\ref{prop11}, we can easily derive the formulae~\eqref{cond31}--\eqref{cond33} from the \SOS's~\eqref{contCond1}--\eqref{contCond3}, respectively. Then, by Proposition~\ref{prop12}, the system is safe.
\end{proof}

The algorithm for computing invariants for semialgebraic hybrid systems based on Proposition~\ref{prop13} is very similar to Algorithm~\ref{algo_prop11} for semialgebraic continuous systems except that it involves more \SOS constraints on continuous and discrete transitions.

\section{Implementation and Experiments}\label{sec:evaluation}
% In this section, we report on our prototype implementation and our experimental results on some nonlinear benchmark systems.

Based on the approach presented in this paper, we implemented a prototype tool in \emph{Maple} and \emph{Matlab}, respectively. In \emph{Maple}, we implemented the tool for computing invariant clusters and identifying invariant classes based on the remainder computation of the Lie derivative of a polynomial w.r.t. its \Groba and solving the system of polynomial equations obtained from the coefficients of the remainder. In \emph{Matlab}, we implemented the tool for safety verification based on the \SOS programming tool package \emph{SOSTOOLS}. Currently, we manually transfer the invariant clusters computed in \emph{Maple} to \emph{Matlab} for safety verification. In the future, we will integrate the two packages into a single tool.

\smallskip

Now, we present the experimental results on nonlinear benchmark systems, run on a laptop with an 3.1GHz \emph{Intel Core i7} CPU and~$8$GB memory. 
\def\mycolA{p{8mm}}
\def\mycolB{p{11mm}}
\def\mycolC{p{8mm}}
\begin{table*}[t]
  \caption{Benchmark results for the Longitudinal Motion of an Airplane (B1), the Looping Particle system (B2), the Coupled Spring-Mass system (B3), and the 3D-Lotka-Volterra System (B4).}
  \centering
  \scalebox{0.9}{
\begin{tabular}{|c|\mycolA|\mycolA|\mycolA|\mycolA|\mycolB|\mycolB|\mycolB|\mycolB|\mycolC|\mycolC|\mycolC|\mycolC|}
 \hline
 Degree of & \multicolumn{4}{c|}{No. of variables} & \multicolumn{4}{c|}{Running time (sec)} & \multicolumn{4}{c|}{No. of invariant clusters} \\
 \cline{2-13}
 invariants & \centering B1 & \centering B2 & \centering B3 & \centering B4 & \centering B1 & \centering B2 & \centering B3  & \centering B4 & \centering B1 & \centering B2 & \centering B3  & \centering B4\tabularnewline
 \hline
 \hline
 1  & \centering 9 & \centering 4 & \centering 6  & \centering 4 & \centering 0.016 & \centering 0.015& \centering 0.047 & \centering <0.001 & \centering 0 & \centering 0 & \centering 0 & \centering 2 \tabularnewline
 2  & \centering 45 & \centering 10 & \centering 21  & \centering 10 &\centering 0.031 & \centering 0.047 & \centering 0.078 & \centering 0.031 & \centering 1 & \centering 1 & \centering 0& \centering 3 \tabularnewline
 3  & \centering 165 & \centering 20 & \centering 56  & \centering 20 &  \centering 0.484 & \centering 0.049 & \centering 0.250  & \centering 0.109 & \centering 1 & \centering 0 & \centering 1  & \centering 7 \tabularnewline
 4  & \centering 495 & \centering 35 & \centering 126 & \centering 35 & \centering 3.844 & \centering 0.156& \centering 1.109 & \centering 0.312 & \centering 1 & \centering 1 & \centering 1 & \centering 6\tabularnewline
 5  & \centering 1287 & \centering 56 & \centering 252 & \centering 56 & \centering 25.172 & \centering 0.703 & \centering 6.641 & \centering 0.750 & \centering 1 & \centering 0 & \centering 1 & \centering 6\tabularnewline
 6  & \centering 3003 & \centering 84 & \centering 462 & \centering 84 & \centering 200.903 & \centering 3.000 & \centering 32.109 & \centering 1.641 & \centering 1 & \centering 1 & \centering 1  & \centering 16\tabularnewline
 \hline
\end{tabular}
}
\label{tbl:benchmarks}
\end{table*}

\subsection{Longitudinal Motion of an Airplane}\label{motionairplane}
In this experiment, we study the $6$th order longitudinal equations of motion that captures the vertical motion (climbing,
descending) of an airplane (\cite{stengel2004flight}, Chapter $5$). Let $g$ denote the gravity acceleration, $m$ the total mass of an airplane,
$M$ the aerodynamic and thrust moment w.r.t. the $y$ axis, $(X,Z)$ the aerodynamics and thrust forces w.r.t. axis $x$ and $z$, and $I_{yy}$ the second diagonal element of its inertia matrix. Then the motion of the airplane is described as follows.
%\begin{gather*}
%\begin{bmatrix}\dot{v} \\ \dot{w} \\ \dot{x} \\ \dot{z} \\ \dot{\theta} \\ \dot{q} \end{bmatrix} = \begin{bmatrix} {X}/{m} - g \sin (\theta) - q w \\ {Z}/{m} + g \cos (\theta) + q v  \\ w \sin (\theta) + v \cos (\theta) \\ -v \sin (\theta) + w  \cos (\theta) \\ q \\ {M}/{I_{yy}}\end{bmatrix}
%\end{gather*}
\begin{align*}
\dot{v}&=\frac{X}{m} - g \sin (\theta) - q w,  &\dot{w}&=\frac{Z}{m} + g \cos (\theta) + q v,\\
\dot{x}&=w \sin (\theta) + v \cos (\theta), &\dot{z}&=-v \sin (\theta) + w  \cos (\theta),\\
\dot{\theta}&=q, &\dot{q}&=\frac{M}{I_{yy}},
\end{align*}

where the meanings of the variables are as follows: $v$: axial velocity, $w$: vertical velocity, $x$: range, $z$: altitude, $q$: pitch rate, $\theta$: pitch angle.

To transform the above system into a semialgebraic system, we first introduce two additional variables $d_1$, $d_2$ such that $d_1 = \sin(\theta)$, $d_2=\cos(\theta)$ and then substitute $d_1$ and $d_2$ respectively for $\sin(\theta)$ and $\cos(\theta)$ in the model. In addition, we get two more constraints $\dot{d_1}=q d_2$ and $\dot{d_2}=-q d_1$. As a result, the dimension of the system rises to $8$. For this system, using the method in \cite{ghorbal2014characterizing}, Ghorbal et al. spent $\mathbf{1}$ hour finding three invariant polynomials of degree $3$ on a laptop with a 1.7GHz \emph{Intel Core i5} CPU and $4$GB memory. Using our method, we spent only $\mathbf{0.406}$ seconds obtaining an invariant cluster $g_9(\vu,\vx)=0$ of degree $3$, which is presented in Appendix~\ref{invairplane}. By the constraint $d_1^2+d_2^2 = 1$, we reduce the normal vector of the hyperplane $g_9(\vu,\vx)=0$ in $\vu$ to $(1,\psi_1,\psi_2,\psi_3)$, where $\psi_1,\psi_2,\psi_3$ are defined as follows.

\vspace*{-\baselineskip}
\begin{equation*}
  \begin{split}
\psi_1={}&{\frac {Mmz}{{I_{yy}}\,Z}}+{\frac {gm\theta}{Z}}+ \left( {\frac {mqv}{Z}}+1 \right) \sin \left( \theta \right)\\
       & + \left({\frac {X}{Z}} -{\frac {mqw}{Z}} \right) \cos \left( \theta \right)  \\
\psi_2={}&-{\frac {Xz}{Z}}+x-{\frac {g{\it I_{yy}}\,X\theta}{ZM}} - I_{yy}\left({\frac {Xqv}{ZM}}+{\frac {qw}{M}}\right) \sin( \theta) \\
       &+I_{yy}\left({\frac {Xqw}{ZM}} -{\frac {qv}{M}} -{\frac {{X}^{2}+ {Z}^{2}}{ZMm}} \right) \cos ( \theta ) \\
\psi_3={}&{q}^{2}-2\,{\frac {M\theta}{I_{yy}}} \\
  \end{split}
\end{equation*}

Given a symbolic initial point $\vx_0=(v_0, w_0, x_0, z_0, \theta_0,q_0,$ $d_1^0, d_2^0)$, we have verified that our invariant cluster
defines the same algebraic variety as defined by the invariants in~\cite{ghorbal2014characterizing} by comparing their Gr{\"o}bner bases. However, our method is much more efficient. Moreover, we also obtained the invariant clusters of higher degrees ($4-6$) quickly. The experimental result is shown in Table~\ref{tbl:benchmarks}. The first column is the degree of the invariants, the second column is the variables to be decided, the third column is the computing time in seconds, and the last column is the number of invariant clusters generated.  As can be seen, in the most complicated case, where the number of the indeterminates reaches up to $3003$, we spent only~$200.9$ seconds to discover an invariant cluster of degree $6$. However, we found that these higher order invariant clusters have the same expressive power as the invariant cluster of degree $3$ in terms of algebraic variety.

\subsection{Looping particle}
Consider a heavy particle on a circular path of radius $r$. The motion of the particle is described by the following differential equation.
\vspace*{-.5\baselineskip}
\begin{gather*}
 \begin{bmatrix}\dot{x} \\ \dot{y} \\ \dot{\omega}\end{bmatrix} = \begin{bmatrix} r\dot{\cos(\theta)} \\ r\dot{\sin (\theta)} \\ -\frac{g\cos (\theta)}{r}\end{bmatrix} = \begin{bmatrix} -r\sin (\theta) \dot{\theta} \\ r\cos (\theta) \dot{\theta} \\ -\frac{g(r\cos (\theta))}{r^2}\end{bmatrix} = \begin{bmatrix} -y\omega \\ x\omega \\ -\frac{gx}{r^2}\end{bmatrix}
 \end{gather*}

Note that the above is a parameterized system with gravity acceleration $g$ and radius $r$ as parameters. Our tool finds the following invariant cluster consisting of a parametric polynomial of degree~$2$: $\{ g(\vu,\vx)=0 \mid g(\vu,\vx) = u_5 x^2 + u_5 y^2 + u_2 \omega^2 + \frac{2 u_2 g}{r^2} y +u_0, \vu\in \R^3 \setminus \{\vect{0}\}\}$. Given an arbitrary point $(x_0,y_0,\omega_0)=(2,0,\omega_0)$, we get the invariant class $\{g(\vu,\vx)=0 \mid (x_0^2 + y_0^2) u_5 + (\omega_0^2 + \frac{2 g}{r^2} y_0) u_2 + u_0 = 0, \vu\in \R^3\setminus \{\vect{0}\}\}$. According to Algorithm~\ref{algo_invclass}, the algebraic variety representing the trajectory originating from $(x_0,y_0,\omega_0)$ is $\{(x,y,\omega)\in \R^3 \mid x^2 + y^2 - x_0^2 - y_0^2 = 0, \omega^2 + \frac{2 g}{r^2} y - \omega_0^2 - \frac{2 g}{r^2} y_0 = 0\}$. The results in \cite{rebiha2015generating} and \cite{sankaranarayanan2004constructing} are special cases of our result when setting $(r,g)=(2,10)$ and  $(r,g,x_0,y_0)=(2,10,2,0)$, respectively. Therefore, our method is more powerful in finding parameterized invariants for parameterized systems. See Table~\ref{tbl:benchmarks} for detailed experimental result.

\subsection{Coupled spring-mass system}\label{coupledmass}
% \vspace*{-\baselineskip}
Consider the system
 \begin{gather*}
 \begin{bmatrix}\dot{x_1} \\ \dot{v_1} \\ \dot{x_2} \\ \dot{v_2}\end{bmatrix} = \begin{bmatrix} v_1 \\ -\frac{k_1}{m_1}x_1 - \frac{k_2}{m_1}(x_1-x_2) \\v_2 \\ -\frac{k_2}{m_2}(x_2-x_1)\end{bmatrix}
 \end{gather*}
%Consider the system
%\begin{align*}
% \left[ \dot{x_1}, \dot{v_1}, \dot{x_2}, \dot{v_2} \right] = & \left[ v_1, -\frac{k_1}{m_1}x_1 - \frac{k_2}{m_1}(x_1-x_2), \right. \\
% &\ \left. v_2, -\frac{k_2}{m_2}(x_2-x_1) \right].
%\end{align*}

The model consists of two springs and two weights $w_1, w_2$. One spring, having spring constant $k_1$, is attached to the ceiling and the weight $w_1$ of mass $m_1$ is attached to the lower end of this spring. To the weight $w_1$, a second spring is attached having spring constant $k_2$. To the bottom of this second spring, a weight $w_2$ of mass $m_2$ is attached. $x_1$ and $x_2$ denote the displacements of the center of masses of the weights $w_1$ and $w_2$ from equilibrium, respectively.

In this benchmark experiment, we first tried an instantiated version of the system by using the same parameters as in~\cite{sankaranarayanan2010automatic}: $\frac{k_1}{m_1}=\frac{k_2}{m_2}=k$ and $m_1=5m_2$. The experimental result is presented in Table~\ref{tbl:benchmarks}. We found that the expressive power of the invariant clusters does not increases any more as the degree is greater than $3$ and it took only $0.250$ seconds to compute the invariant cluster of degree $3$. Finally, we perform the computation directly on the fully parameterized system and we get
the following parameterized invariant cluster.

\vspace*{-\baselineskip}
\begin{equation*}
  \begin{split}
    g(\vect{u},\vect{x}) &= u_{8}v_1v_2 + \frac{k_2x_1x_2(m_1u_{8}-2m_2u_{10})}{m_1m_2} + u_{10}v_1^2 +u_{1} \\
    &+\ \frac{1}{2} \frac{v_2^2(k_1m_2u_{8}-k_2m_1u_{8}+k_2m_2u_{8}+2k_2m_2u_{10})}{k_2m_1} \\
    &+\ \frac{1}{2}\frac{(2k_1m_2u_{10}-k_2m_1u_{8}+2k_2m_2u_{10})x_1^2}{m_1m_2} \\
    &+\ \frac{1}{2}\frac{(k_1m_2u_{8}-k_2m_1u_{8}+2k_2m_2u_{10})x_2^2}{m_1m_2}
  \end{split}
\end{equation*}

%an parameterized invariant cluster which is presented in Appendix~\ref{appendspring}.
This invariant cluster enables us to analyze the system properties under different parameter settings.

\subsection{3D-Lotka-Volterra system}
% \vspace*{-\baselineskip}
% \begin{gather*}
% \begin{bmatrix}\dot{x} \\ \dot{y} \\ \dot{z}\end{bmatrix} = \begin{bmatrix} & xy & -& xz \\ - & xy & + & yz \\ & xz & - & yz \end{bmatrix}
% \end{gather*}
Consider the system
$
 \left[ \dot{x}, \dot{y}, \dot{z} \right] = [ xy - xz, yz - yx, xz - yz ].
$
The experimental result is presented in Table~\ref{tbl:benchmarks}. Here we present only two invariant clusters to show their expressive power: one of degree~$1$ and one of degree~$3$.
$
 \clus_1=\{g_1(\vect{u},\vect{x})= u_3 x + u_3 y + u_3 z + u_4 = 0 \mid \vect{u}\in \R^2\setminus \{\vz\}\},
 \clus_3=\{g_3(\vect{u},\vect{x}) = 0 \mid \vect{u}\in \R^4\setminus \{\vz\}\},
$
where
% \begin{equation*}
% \begin{split}
% g_3(\vect{u},\vect{x})= &\ u_5 x y z + u_{10}x^3 + 3u_{10}x^2y + 3 u_{10}x^2 z + 3 x y^2   u_{10} + 3 x z^2   u_{10}\\
% &+ u_{10}y^3    + 3 u_{10}y^2 z    + 3 u_{10}y z^2    + u_{10}z^3    + u_{16}x^2    + 2 u_{16}x y   \\
% &+2 u_{16}x z    + u_{16}y^2 + 2u_{16} yz + u_{16}z^2 + u_{19}x + u_{19}y + u_{19}z    + u_{20}\\
% \end{split}
% \end{equation*}
$
 g_3(\vect{u},\vect{x})= \ u_5 x y z + u_{10}x^3 + 3u_{10}x^2y + 3 u_{10}x^2 z + 3 x y^2   u_{10} + 3 x z^2   u_{10}
 + u_{10}y^3    + 3 u_{10}y^2 z    + 3 u_{10}y z^2    + u_{10}z^3    + u_{16}x^2    + 2 u_{16}x y
 +2 u_{16}x z    + u_{16}y^2 + 2u_{16} yz + u_{16}z^2 + u_{19}x + u_{19}y + u_{19}z    + u_{20}.
$
Invariant cluster $\clus_1$ shows that every trajectory of the system must lie in some plane. $\clus_3$ can overapproximate the trajectories with a much higher precision because it can provide algebraic varieties of dimension $1$.
For example, in \cite{sankaranarayanan2010automatic}, the initial set is given as
$\Init=\{(x,y,z)\in \mathbb{R}^3 \mid \ x^2-1=0$, $y^2-1=0$, $z^2-1=0\}$ and they discovered four invariants.
In our experiment, we spend $0.109$ seconds discovering an invariant cluster. This invariant cluster can overapproximate all the trajectories precisely, say, for $x_0=(1,-1,1)\in \Init$, we get the invariant class $\class(\clus_3,x_0)=\{g_3(\vect{u},\vect{x})=0 \mid -u_5+ 7u_{10}+ u_{16} + u_{19} + u_{20}=0, \vu\in \R^5\setminus\{\vz\}\}$.  By taking the intersection of its basis $xyz + 1=0$ and $x+y+z-1=0$, we immediately obtain a one-dimensional algebraic variety which overapproximates the trajectory precisely.

\subsection{Hybrid controller}\label{hybridexamp}
Consider a hybrid controller consisting of two control modes. The discrete transition diagram of the system is shown in \figurename~\ref{fig:hybridauto} and the vector fields describing the continuous behaviors are as follows:
\begin{align*}
f_1(\vect{x}) &= \begin{bmatrix}y^2+10y+25 \\ 2xy + 10x - 40y -200\end{bmatrix}, \\
f_2(\vect{x}) &= \begin{bmatrix}-y^2-10y-25\\ 8xy+40x-160y-800\end{bmatrix}
\end{align*}
The system starts from some point in $X_0 = \{(x,y) \in \R^2 \mid (x - 9)^2 + (y - 20)^2 \leq 4\}$ and then evolves following the vector field $f_1(\vect{x})$ at location $l_1$ (Switch-On). The value of $x$ keeps increasing until it reaches $35$, then the system switches immediately to location $l_2$ (Switch-Off) without performing any reset operation. At location $l_2$, the system operates following the vector field $f_2(\vect{x})$ and the value of $x$ keeps decreasing. As the value of $x$ drops to $5$, the system switches immediately back to location $l_1$ again. Our objective is to verify that the value of $y$ will never exceed $48$ in both locations.

For the convenience of \SOS programming, we define the unsafe set as $\HUns(l_1)=\HUns(l_2)=\{(x,y)\in \R^2 \mid 48 < y < 60\}$, which is sufficient to prove $y \leq 48$ in locations $l_1$ and $l_2$. According to the theory proposed in Section~\ref{sec:hybridverification}, we first find an invariant cluster for each location, which is composed of a parameterized polynomial, respectively: $g_1(\vu_1,\vx)= -\frac{1}{5} u_{12} x^2+ \frac{1}{10} u_{12} y^2 + 8u_{12} x + u_{12} y + u_{11}$ and $g_2(\vu_2,\vx)= \frac{4}{5} u_{22} x^2 + \frac{1}{10} u_{22} y^2 - 32u_{22} x + u_{22}y+ u_{21}$. In the second phase, we make use of the constraint condition in Proposition~\ref{prop13} to compute a pair of vectors $\vu_1^*$ and $\vu_2^*$. By setting $\gamma_{12}=\gamma_{21}=1$, our tool found a pair of $\vu_1^*=(u_{11},u_{12})=(2.9747,382.14)$ and $\vu_2^*=(u_{21},u_{22})=(2.9747,138.44)$. As shown in \figurename~\ref{fig:hybridinv}, the curves of $g_1(\vu_1^*,\vx)=0$ and $g_2(\vu_2^*,\vx)=0$ form an upper bound for the reachable set in location $l_1$ and $l_2$, respectively, which lie below the unsafe region $y\geq 48$. Therefore, the system is safe.

% \begin{figure}[!t]
%  \centering
%   \includegraphics[scale=0.5]{12.eps}
%   %\includegraphics[trim={2cm 2cm 2cm 2cm},clip]{12.pdf}
% \caption{Example~\ref{example10}. Invariant $g(u^*,x)=0$ (blue curve) separating reachable set of $Init$ (green patch) from $Uns$ (red patch).}
% \label{fig_hybridctrler}
% \end{figure}

\begin{figure}[t!]
% \hfill
\begin{subfigure}[b]{0.495\linewidth}
%   \vspace{0cm}
  \centering
  \includegraphics[scale=0.33]{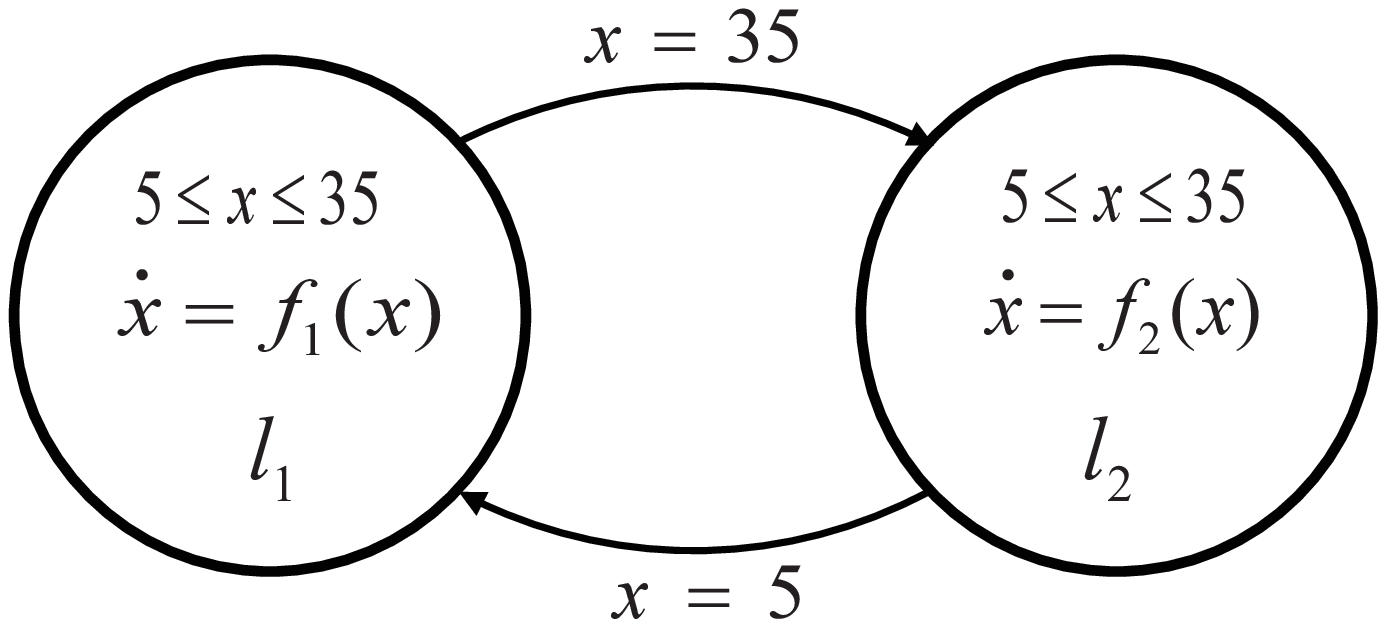}
%   \caption{Hybrid automaton from Subsection~\ref{hybridexamp}. $x=5$ and $x=35$ are guards for discrete transitions and no reset operation is performed.}
  \caption{}
  \label{fig:hybridauto}
\end{subfigure}
% \hfill\quad\hfill
\begin{subfigure}[b]{0.495\linewidth}
%   \vspace{0pt}
  \centering
  \includegraphics[height=30mm,keepaspectratio]{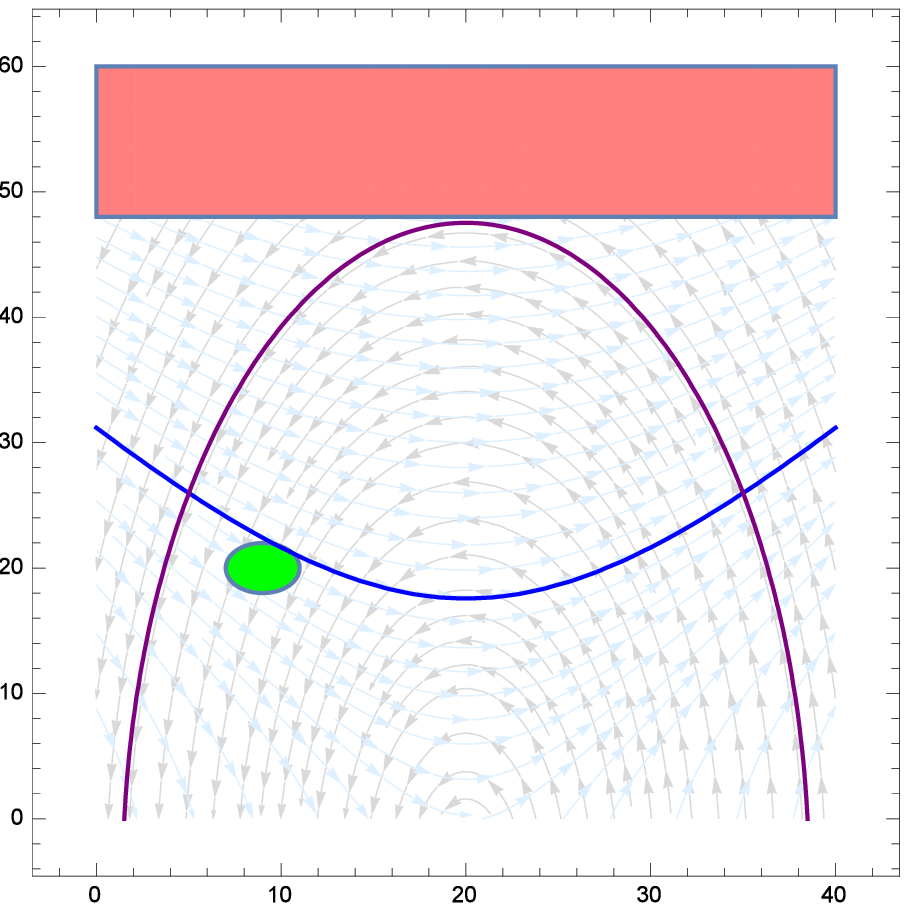}
%   \caption{Hybrid invariant for the system in Subsection~\ref{hybridexamp}. Solid patch in green: initial set, curve in blue: invariant for $l_1$, curve in purple: invariant for $l_2$, red shadow region on the top: unsafe region.}
  \caption{}
  \label{fig:hybridinv}
\end{subfigure}
\caption{(a) Hybrid automaton from Subsection~\ref{hybridexamp}. $x=5$ and $x=35$ are guards for discrete transitions and no reset operation is performed.
(b) Hybrid invariant for the system in Subsection~\ref{hybridexamp}. Solid patch in green: initial set, curve in blue: invariant for $l_1$, curve in purple: invariant for $l_2$, red shadow region on the top: unsafe region.}
% \hfill\
\vspace*{-5mm}
\end{figure}

\section{Related Work}\label{sec:relatedwork}

In recent years, many efforts have been made toward generating invariants for hybrid systems. Matringe et al. reduce the invariant generation problem to the computation of the associated eigenspaces by encoding the invariant constraints as symbolic matrices~\cite{matringe2010generating}. Ghorbal et al. use the invariant algebraic set formed by a polynomial and a finite set of its successive Lie derivatives to overapproximate vector flows~\cite{ghorbal2014characterizing}. Both of the aforementioned methods involve minimizing the rank of a symbolic matrix, which is inefficient in dealing with parametric systems. In addition, none of these methods involve how to verify safety properties based on the invariants. Sankaranarayanan discovers invariants based on invariant ideal and pseudo ideal iteration~\cite{sankaranarayanan2010automatic}, but this method is limited to algebraic systems. Tiwari et al. compute invariants for special types of linear and nonlinear systems based on \emph{Syzygy} computation and \Groba theory as well as linear constraint solving~\cite{tiwari2004nonlinear}. Platzer et al. use quantifier elimination to find differential invariants~\cite{platzer2008computing}. The methods based on \Groba and first-order quantifier elimination suffer from the high complexity significantly. Another approach considers barrier certificates based on different inductive conditions~\cite{prajna2004safety,kong2013exponential} which can be solved by \SOS programming efficiently but is limited by the conservative inductive condition. Carbonell et al. generate invariants for linear systems~\cite{rodriguez2005generating}. Some other approaches focusing on different features of systems have also been proposed for constructing inductive invariants~\cite{johnson2013invariant,gulwani2008constraint,sankaranarayanan2004constructing,sassi2014iterative,liu2011computing}.

\section{Conclusion}\label{sec:conclusion}
In this paper, we proposed an approach to automatically generate invariant clusters for semialgebraic hybrid systems. The benefit of invariant clusters is that they can overapproximate trajectories of the system precisely. The invariant clusters can be obtained efficiently by computing the remainder of the Lie derivative of a template polynomial w.r.t. its \Groba and then solving a system of homogeneous polynomial equations obtained from the remainder. Moreover, based on invariant clusters and \emph{SOS} programming, we propose a new method for safety verification of hybrid systems. Experiments show that our approach is effective and efficient for a large class of biological and control systems.

\section*{Acknowledgement}
This research was supported in part by the European Research Council
(ERC) under grant 267989 (QUAREM) and  by the Austrian Science Fund
(FWF) under grants S11402-N23 (RiSE) and Z211-N23 (Wittgenstein
Award).

\bibliographystyle{IEEEtran}
\bibliography{IEEEabrv,myrefs}

\vfill
\pagebreak
\appendix

\section{Appendix}

\subsection{Proof of Proposition~\ref{prop9}}\label{prfofprop9}
\begin{proof}

%According to Proposition~\ref{prop6}, $g(x)=0$ is an invariant. We only need to prove that for any trajectory $x(t)$ of $\sys$,
%\begin{equation}\label{eq13}
% g(x(0)) \neq 0 \implies \forall t\geq 0: g(x(t)) \neq 0
%\end{equation}

Let $g(x)\in \R[\vx]$ be a polynomial satisfying $\mathcal{L}_f g\in \ideal{g}$, we first prove that
\begin{equation}\label{eq14}
\forall k \geq 1: \mathcal{L}_f^k g\in \ideal{g}
\end{equation}
 The proof is by induction. Since $g(x)$ satisfies $\mathcal{L}_f g\in \ideal{g}$, we must have $\mathcal{L}_f g = p_1g$ for some $p_1\in \R[x]$. Assume $\forall k\leq m: \mathcal{L}_f^k g\in \ideal{g}$ holds, we only need to prove $\mathcal{L}_f^{m+1} g\in \ideal{g}$ holds. By assumption, we have $\mathcal{L}_f^{m} g\in \ideal{g}$, that is, $\mathcal{L}_f^{m} g = p_{m} g$ for some $p_{m}\in \R[x]$. Then, $\mathcal{L}_f^{m+1} g = \mathcal{L}_f \mathcal{L}_f^{m} g = \mathcal{L}_f p_{m} g = g \mathcal{L}_f p_{m} + p_{m} \mathcal{L}_f g = g \mathcal{L}_f p_{m} + p_{m}p_1 g = g(\mathcal{L}_f p_{m} + p_{m}p_1)$. Therefore, the formula~\eqref{eq14} holds.

Second, we prove that $g(x)=0$ is an invariant, i.e. for any trajectory $x(t)$,
\begin{equation}\label{eq13}
 g(x(0)) = 0 \implies \forall t\geq 0: g(x(t)) = 0
\end{equation}
By the formula~\eqref{eq14}, we have $\frac{d^m g(x(t))}{dt^m}|_{t=\tau} = \mathcal{L}_f^m g|_{t=\tau} = p_m g(x(\tau))$ for all $m \geq 1$, then $g(x(0))=0\implies \frac{d^m g(x(t))}{dt^m}|_{t=0}=0$ for all $m \geq 1$. Consider the Taylor expansion of $g(x(t))$ at $t=\tau$,
\begin{equation}\label{eq16}
  g(x(t)) = g(x(\tau)) + \sum_{m=1}^{\infty}\frac{1}{m!}\frac{d^m g(x(t))}{dt^m}|_{t=\tau}(t-\tau)^m
\end{equation}
Hence, the formula~\eqref{eq13} holds.

Next, we prove that for any trajectory $x(t)$,
\begin{equation}\label{eq15}
 g(x(0)) \neq 0 \implies \forall t\geq 0: g(x(t)) \neq 0
\end{equation}
then, according to the continuity of $g(x(t))$, we can assert that $g(x)\sim 0$ is an invariant for each $\sim \in \{<, \leq, =, \geq, >\}$.

The proof is by contradiction. Given an arbitrary trajectory $x(t)$, assume $g(x(\tau))=0$ for some $\tau>0$ and $g(x(t))>0$ (or $g(x(t))<0$) for all $t\in [0,\tau)$. Since $\frac{d^m g(x(t))}{dt^m}|_{t=\tau} = \mathcal{L}_f^m g|_{t=\tau} = p_m g(x(\tau))=0$ for all $m\geq 1$, then according to the Taylor expansion~\eqref{eq16}, there must exist a $\delta\in \R_{>0}$ such that $g(x(t))=0$ for all $t\in [\tau-\delta, \tau+\delta]$, which contradicts the assumption that $g(x(t))>0$ (or $g(x(t))<0$) for all $t\in [0,\tau)$. Thus, the proposition holds.

\end{proof}

\subsection{Proof of Theorem~\ref{thrm7}}\label{thrm7proof}
\begin{proof}
\ref{item1}. By Definition~\ref{def_inst}, for all $g(\vect{u},\vect{x})\in D_g$, $g(\vect{u},\vect{x})=0$ is an invariant for the trajectory originating from $\vect{x}_0$, which means $\pi_{\vect{x}_0}\subseteq \{\vect{x}\in \R^n \mid g(\vect{u},\vect{x})=0\}$. Hence, $\pi_{\vect{x}_0} \subseteq \V(D_g)$ holds.

\ref{item2}. Since the dimension of the hyperplane $g(\vect{u},\vect{x}_0)=0$ is $m$, there must exist $m$ vectors $\{\vect{u}_1,\dots,\vect{u}_{m}\}$ such that for all $\vect{u}$ satisfying $g(\vect{u},\vect{x}_0)=0$, there exist scalars $c_1,\dots,c_{m}$, not all zero, such that $\vect{u}=\sum_{i=1}^{m}c_i \vect{u}_i$. Therefore, for every $g(\vect{u},\vect{x})\in D_g$, we have $g(\vect{u},\vect{x})=g(\sum_{i=1}^{m}c_i \vect{u}_i, \vect{x})=\sum_{i=1}^{m}c_i g(\vect{u}_i,\vect{x})$, hence, $g(\vect{u},\vect{x})\in \ideal{g(\vect{u}_1,\vect{x}),\dots,g(\vect{u}_{m},\vect{x})}$. For $B=\{g(\vect{u}_1,\vect{x}),\dots,g(\vect{u}_{m},\vect{x})\}$, we have $\ideal{D_g} \subseteq \ideal{B}$. The converse is trivial. Therefore, $\ideal{D_g}\ =\ \ideal{B}$.

\end{proof}

\subsection{Principle of \SOS programming}\label{sosprogramming}
The principle of \SOS programming is based on the fact that a polynomial of degree~$2k$ can be written as a sum-of-squares (\SOS) $P(\vx) = \sum q_i(\vx)^2$ for some polynomials $q_i(\vx)$ of degree~$k$ if and only if $P(\vx)$ has a positive semidefinite quadratic form, i.e. $P(\vx) = \vect{v}(\vx)M\vect{v}(\vx)^T$, where $\vect{v}(\vx)$ is a vector of monomials with respect to $x$ of degree~$k$ or less and $M$ is a real symmetric positive semidefinite matrix with the coefficients of $P(\vx)$ as its entries. Therefore, the problem of finding a \SOS polynomial $P(\vx)$ can be converted to the problem of solving a linear matrix inequality (\emph{LMI}) $M\succeq 0$~\cite{boyd1994linear}, which can be solved by semidefinite programming~\cite{parrilo2003semidefinite}. Currently, there exists an efficient implementation, named \emph{SOSTOOLS}~\cite{prajna2005sostools}, for \SOS programming and our implementation is based on this tool. For details on \SOS programming we refer the reader to the literature.

\subsection{Proof of Theorem~\ref{thrm8}}\label{proofofthrm8}
\begin{proof}
When considering $g(\vect{u},\vect{x})=0$ as a parameterized hyperplane over $\vect{u}$, since there is no constant term, the normal vector of the hyperplane is $(\psi_1(\vect{x}_i),$ $\dots,\psi_K(\vect{x}_i))$. Therefore, if $\vect{x}_1$ and $\vect{x}_2$ are in the same trajectory, they must correspond to the same hyperplane, namely, the normal vectors of $g(\vect{u},\vect{x}_1)$ and $g(\vect{u},\vect{x}_2)$ are either all equal to $\vect{0}$, which indicates the formula \eqref{zeronormalvec} holds, or are proportional to one another, which means the formula~\eqref{normalvec} holds. Moveover, if $\psi_i(\vect{x})\equiv 1$, then formula~\eqref{normalvec1} follows from \eqref{normalvec} immediately.

\end{proof}

\subsection{Proof of Theorem~\ref{thrm2}}\label{proofofthrm2}
\begin{proof}
By Theorem~\ref{citedthrm1}, the identity~\eqref{eq12} holds if and only if there exists no real solution for the following system $P$ of polynomial equations and inequalities
\begin{equation}\label{polyidentity}
\begin{split}
 & \psi_k(\vx_1)-\psi_k(\vx_2) = 0,\ k=1 \dots K\\
 & p_{i_1}(\vx_1) = 0,\ p_{i_2}(\vx_2)=0,\\
 &\qquad i_1=1, \dots, l_1, i_2=l_1+1, \dots l_1 + l_2\\
 & q_{j_1}(\vx_1) \geq 0,\ q_{j_2}(\vx_2) \geq 0,\\
 &\qquad j_1=1, \dots, m_1,\ j_2=m_1+1 \dots m_1+m_2\\
 & r_{k_1}(\vx_1) > 0,\ r_{k_2}(\vx_2) > 0,\\
 &\qquad k_1=1, \dots, n_1,\ k_2=n_1+1 \dots n_1+n_2.
\end{split}
\end{equation}
That formula~\eqref{polyidentity} has no solution indicates that there exists no $(\vx_1,\vx_2)\in \Init\times \Uns$ such that $\psi_i(\vx_1)=\psi_i(\vx_2)$ for all $i=1 \dots K$. By the assumption $K\geq 2$, we have $\class(C,x_i) \neq \emptyset, i=1,2$ for any $(\vect{x}_1,\vect{x}_2)\in \Init\times\Uns$. By Theorem~\ref{thrm8}, the system is safe.

\end{proof}

\subsection{Proof of Proposition~\ref{prop10}}\label{proofofprop10}
\begin{proof}
By Proposition~\ref{prop9}, $g(\vect{u},\vect{x})\geq 0$ is an invariant of $\sys$ for any $\vect{u}\in \R^K$. Suppose $g(\vect{u}^*,\vect{x})$ satisfies the constraints~\eqref{cond1} and \eqref{cond2} for some $\vect{u}^*\in\R^K$ and $\vect{x}(t)$ is an arbitrary trajectory of $\sys$ such that $\vect{x}(0)\in \Init$, we must have $g(\vect{u}^*,\vect{x}(t)) \geq 0$ for all $t\geq 0$, which means $\vect{x}(t)$ cannot reach $\Uns$. Therefore, the system is safe.

\end{proof}

\subsection{Proof of Proposition~\ref{prop11}}\label{proofofprop11}
\begin{proof}
Since the polynomial \eqref{cond21} is an \SOS, then $\forall \vect{x}\in \R^n: -g(\vect{u}^*,\vect{x}) - \vect{\mu}_1\cdot \vect{I} \geq 0$, i.e. $\forall \vect{x}\in \R^n: -g(\vect{u}^*,\vect{x}) \geq \vect{\mu}_1\cdot \vect{I}$. Since $\forall \vect{x}\in \Init: \vect{I} \succeq \vz$ and $\vect{\mu}_1$ is an \SOS polynomial vector, which implies $\forall \vx\in \R^n:\vect{\mu}_1\succeq \vz$, then $\forall \vx\in\Init: -g(\vect{u}^*,\vect{x}) \geq \vect{\mu}_1\cdot \vect{I} \geq 0$. Similarly, we can derive from the \SOS polynomial~\ref{cond22} that $\forall \vx\in \Uns: -g(\vect{u}^*,\vect{x}) < 0$. By Proposition~\ref{prop10}, we can conclude that the system is safe.

\end{proof}

\subsection{Proof of Proposition~\ref{prop12}}\label{proofprop12}
\begin{proof}
Assume $S_u$ is the set that satisfies the formulae~\eqref{cond31}, \eqref{cond32} and \eqref{cond33}, by Proposition~\ref{prop9}, every $g_l(\vu_l^*,\vx)\geq 0$ is an invariant at location $l$. To prove this proposition, it is sufficient to prove that given any trajectory, say $\pi$, of the system $\mathcal{H}$, it cannot reach an unsafe state. Suppose the infinite time interval $\mathbb{R}_{\geq 0}$ associated with $\pi$ is divided into an infinite sequence of continuous time subintervals, i.e., $\mathbb{R}_{\geq 0} = \bigcup_{n=0}^\infty I_n$, where $I_n = \{t\in \mathbb{R}_{\geq 0}| t_n \leq t \leq t_{n+1, t_n<t_{n+1}}\}$ is the time interval that the system spent at location $\rho(I_n)$ (where $\rho(I_n)$ returns the location corresponding to $I_n$ associated to $\pi$), we define the trajectory as $\pi = \{(\rho(I_n),\vx(t))| t \in I_n, n \in \mathds{N}\}$, where $\vx(t_0) \in \HInit(\rho(I_0))$. Then, our objective is to prove the following assertion:
\begin{equation}\label{assertion1}
\forall n \in \mathds{N}: \forall t\in I_n:g_{\rho(I_n)}(\vu_{\rho(I_n)}^*,\vx(t))\geq 0.
\end{equation}
The basic proof idea is by induction.

\noindent \emph{Basis:}
$n=0$. Since $g_{\rho(I_0)}(\vu_{\rho(I_0)}^*,\vx)\geq 0$ is an invariant, it is obvious that
\[\forall t\in I_0:g_{\rho(I_0)}(\vu_{\rho(I_0)}^*,\vx(t))\geq 0.\]

\noindent \emph{Induction:}
$n=k$. Assume for some $k$, \[\forall j \in [0,k]:\forall t\in I_j: g_{\rho(I_j)}(\vu_{\rho(I_j)}^*,\vx(t))\geq 0.\]
we mean to prove that \[\forall t\in I_{k+1}:g_{\rho(I_{k+1})}(\vu_{\rho(I_{k+1})}^*,\vx(t))\geq 0.\]

\noindent Case $1$. (Discrete Transition) By the inductive assumption, we know that
\[\forall t\in I_k: g_{\rho(I_k)}(\vu_{\rho(I_k)}^*,\vx(t))\geq 0.\]
hence \[\forall t\in I_k:\vx(t)\in G(\rho(I_k),\rho(I_{k+1})) \implies g_{\rho(I_k)}(\vu_{\rho(I_k)}^*,\vx(t)) \geq 0\]
According to condition~\eqref{cond32}, $ \forall \vx'(t) \in R((l,l'),\vx):g_{\rho(I_{k+1})}(\vu_{\rho(I_{k+1})}^*,\vx'(t)) \geq 0$ holds.

\noindent Case 2. (Continuous Transition) \\
Based on Case $1$ and the fact that $g_{\rho(I_{k+1})}(\vu_{\rho(I_{k+1})}^*, \vx)\geq 0$ is an invariant at $\rho(I_{k+1})$, we can conclude that $\forall t \in I_{k+1}:g_{\rho(I_{k+1})}(\vu_{\rho(I_{k+1})}^*, \vx(t))\geq 0$.

By induction, we know that the assertion~\eqref{assertion1} holds. Therefore, the system is safe.
\end{proof}

\subsection{Invariant clusters of Example~\ref{example8}}\label{append_exam2}
\begin{equation*}
  \begin{split}
  &\clus_1=\{u_2(x-y)=0\mid u_2\in \R\},\ \clus_2=\{u_2(x+y)=0\mid u_2\in \R\},\\
  &\clus_3=\{u_3(x+z)=0\mid u_3\in \R\},\ \clus_4=\{u_3(x-z)=0\mid u_3\in \R\},\\
  &\clus_5=\{u_3(y+z)=0\mid u_3\in \R\},\ \clus_6=\{u_3(y-z)=0\mid u_3\in \R\}.\\
  \end{split}
\end{equation*}

\subsection{Overapproximating a convex compact set}\label{overapprox}
In fact, overapproximating a compact semialgebraic set by one or multiple ellipsoids is reasonable. If a compact semialgebraic set is overapproximated by an ellipsoid $(\vx-\vect{v})^TA(\vx-\vect{v})\leq 1$, where $A$ is a positive definite matrix and $x, v$ are vectors, then we can use $(x-v)^TA(x-v) = 1$ equivalently instead of $(\vx-\vect{v})^TA(\vx-\vect{v})\leq 1$ for the verification, the reason is that any trajectory originating from the interior of the ellipsoid must pass through some point on the surface of the ellipsoid.

\subsection{Additional information for Example~\ref{example9}}\label{verifydata}
\begin{equation*}
  \begin{split}
  \alpha_1 =& -0.027854 x_1^2 - 0.048663y_1^2 - 0.014007x_2^2 - 0.031091 y_2^2\\
	              & - 0.019017\\
  \alpha_2 =& -0.0227x_1^2 - 0.027808 y_1^2 - 0.034068x_2^2 - 0.030096y_2^2 \\
                 &- 0.00016071\\
  \alpha_3 =& -1.8255x_1^2 + 1.8251y_1^2 + 1.8211x_2^2 - 1.8282y_2^2 + 0.014587
  \end{split}
\end{equation*}
\begin{equation*}
\begin{split}
  \Prog = &\ 1.8533 x_1^4 - 3.5741 x_1^2 y_1^2 - 3.6098 x_1^2 x_2^2 + 3.7075 x_1^2 y_2^2\\
  &+ 1.8738 y_1^4 + 3.688 y_1^2 x_2^2 - 3.5944 y_1^2 y_2^2 + 1.8551 x_2^4\\
  &- 3.5851 x_2^2 y_2^2 + 1.8583 y_2^4 + 0.83561 x_1^3 - 0.94703 x_1^2 y_1\\
  &- 0.49941 x_1^2 x_2 - 0.74911 x_1^2 y_2 + 1.4599 x_1 y_1^2 + 0.4202 x_1 x_2^2\\
  &+ 0.93272 x_1 y_2^2 - 1.6545 y_1^3 - 0.61178 y_1^2 x_2 - 0.91767 y_1^2 y_2\\
  &- 0.47623 y_1 x_2^2 - 1.0571 y_1 y_2^2 - 0.7495 x_2^3 - 1.1242 x_2^2 y_2\\
  &- 0.66212 x_2 y_2^2 - 0.99318 y_2^3 + 23.1976 x_1^2 + 35.9056 y_1^2\\
  &+ 20.5634x_2^2 + 27.7403 y_2^2 + 0.5705 x_1 - 0.64657 y_1\\
  & - 0.0035356 x_2 - 0.0053034y_2 + 9.8186
\end{split}
\end{equation*}

\subsection{Invariant Cluster for Subsection~\ref{motionairplane}}\label{invairplane}

\begin{equation*}
\begin{split}
g_9(\vect{u},\vect{x})=& \left(-{\frac {mqw{d_2}}{Z}}+{\frac {mMz}{{I_{yy}}\,Z}}+{\frac {X{d_2}}{Z}}+d_1+{\frac {mg\theta\,d_1^{2}}{Z}}\right.\\
&\left. +{\frac {mg\theta\,d_2^{2}}{Z}}+{\frac {mqv{d_1}}{Z}} \right) {u}_{5} + \left( {\frac {{I_{yy}}\,Xqw{d_2}}{ZM}}-{\frac {{I_{yy}}\,qw{d_1}}{M}}\right. \\
&-{\frac {Xz}{Z}}+x-{\frac {( {I_{yy}}\,{X}^{2}+{I_{yy}}\,{Z}^{2}) {d_2}}{ZMm}}-{\frac {g{I_{yy}}\,X\theta\,d_1^{2}}{ZM}}\\
&\left. -{\frac {g{I_{yy}}\,X\theta\,d_2^{2}}{ZM}} -{\frac {{I_{yy}}\,qv{d_2}}{M}}-{\frac {{I_{yy}}\,Xqv{d_1}}{ZM}} \right) {u}_{85}\\
&+ \left( d_1^{2}+d_2^{2}\right) {u}_{{8}}+ \left( {q}^{2}-2\,{\frac {M\theta}{{I_{yy}}}} \right) {u}_{{17}}+ {u}_{1}\\
\end{split}
\end{equation*}

%%% moved to main part:
% 
% \subsection{Parameterized Invariant Cluster for Subsection~\ref{appendspring}}\label{appendspring}
% 
% \begin{equation*}
%   \begin{split}
%     g(\vect{u},\vect{x}) &= u_{8}v_1v_2 + \frac{k_2x_1x_2(m_1u_{8}-2m_2u_{10})}{m_1m_2} + u_{10}v_1^2 +u_{1} \\
%     &+\ \frac{1}{2} \frac{v_2^2(k_1m_2u_{8}-k_2m_1u_{8}+k_2m_2u_{8}+2k_2m_2u_{10})}{k_2m_1} \\
%     &+\ \frac{1}{2}\frac{(2k_1m_2u_{10}-k_2m_1u_{8}+2k_2m_2u_{10})x_1^2}{m_1m_2} \\
%     &+\ \frac{1}{2}\frac{(k_1m_2u_{8}-k_2m_1u_{8}+2k_2m_2u_{10})x_2^2}{m_1m_2}
%   \end{split}
% \end{equation*}

\end{document}